\newtheorem{remark}{Remark}[section]
\newtheorem{theorem}[remark]{Theorem}
\newtheorem{lemma}[remark]{Lemma}
\newtheorem{definition}[remark]{Definition}
\numberwithin{equation}{section}
\newcommand{\p}{\partial} 
\title{Simultaneously decoding the unknown stationary state and function parameters for mean field games}
\author[1,*]{Hongyu Liu}
\author[1,$\dagger$]{Catharine W. K. Lo}
\affil[1]{Department of Mathematics, City University of Hong Kong}
\affil[*]{hongyu.liuip@gmail.com, hongyliu@cityu.edu.hk}
\affil[$\dagger$]{wingkclo@cityu.edu.hk}
\date{}
\begin{document}
\maketitle

\begin{abstract}
    Mean field games (MFGs) offer a versatile framework for modeling large-scale interactive systems across multiple domains. This paper builds upon a previous work, by developing a state-of-the-art unified approach to decode or design the unknown stationary state of MFGs, in addition to the underlying parameter functions governing their behavior. This result is novel, even in the general realm of inverse problems for nonlinear PDEs. By enabling agents to distill crucial insights from observed data and unveil intricate hidden structures and unknown states within MFG systems, our approach surmounts a significant obstacle, enhancing the applicability of MFGs in real-world scenarios. This advancement not only enriches our understanding of MFG dynamics but also broadens the scope for their practical deployment in various contexts.

    \medskip
		
		\noindent{\bf Keywords.} Mean field games, inverse problems, Cauchy dataset, unique continuation principle, unique identifiability, unknown stationary solutions.
		
		\noindent{\bf Mathematics Subject Classification (2020)}: Primary 35Q89, 35R30; secondary 91A16, 35R35

\end{abstract}

\section{Introduction}

\subsection{Mathematical Setup}

In recent years, mean field games (MFGs) have emerged as a powerful framework for analyzing the behaviors of large populations of interacting agents. This interdisciplinary approach transcends its origins in statistical mechanics, physics and quantum chemistry, providing a comprehensive perspective for studying complex systems where individual decisions collectively shape outcomes. MFGs are employed to model and investigate a diverse range of intricate physical systems. For instance, these models can simulate crowd movements, where a pedestrian's motion is influenced by the surrounding crowd and traffic flows \cite{Achdou2021econs,MFGCrowd,MFGCrowd+Econs,MFGCar2,MFGAutoCar2}. Similar models are also used in economic and financial markets \cite{MFGCrowd+Econs,CarmonaDelarue2018_1,Lacker2019finance,sequeira2024liquiditypoolsmeanfield,guan2024manyinsurerrobustgamesreinsurance}, where macroeconomic quantities are influenced by microeconomic behaviors of interacting agents. MFGs can also be used to study how social influence and collective behaviors or trends propagate through social networks, affecting individuals' opinions, decisions and behaviors \cite{MFGSocialNetwork}. Due to their capacity to capture emergent phenomena arising from interactions and strategic decision-making made by a large cohort of agents, MFGs have gained significant attention and have become a vibrant field of research.

The concept of mean field games traces its roots back to the groundbreaking work of Lasry and Lions \cite{LasryLions1, LasryLions2}, as well as that of Caines, Huang, and Malham\'e \cite{huang2006large}. Since their inception, mean field games have demonstrated remarkable efficacy in elucidating equilibrium states, optimal strategies, and the emergence of global patterns across diverse domains. Consequently, mean field games have attracted significant interest and attention, evolving into a vibrant and burgeoning field of research.

In such games, each player maximizes their expectations by selecting a strategy that relies not only on their own attributes but also on the distribution density of other players. Every agent is presumed to possess infinite rationality, attributing the same qualities to their peers, while also assuming that their individual strategy minimally impacts the equilibrium.
No player has an incentive to to deviate from the planned control unilaterally. However, in large population dynamic games, it becomes impractical for a player to gather detailed information about the state and strategies of all other players.

Hence, the core of MFGs is rooted in the concept where agents engage in a mean field Nash equilibrium. In this scenario, instead of foreseeing the reactions of others to their actions, individuals, in the extensive limit of a large population, only need to execute strategies based on the distribution of other players. By examining the aggregate behaviors of an infinitely expansive population, MFGs offer a macroscopic viewpoint that captures collective dynamics and strategic interactions of agents, abstracting from individual peculiarities. This macroscopic approach of MFGs, focusing on averaged characteristics, enables a simplified yet profound analysis of intricate systems without the need for approximations.

The mean field equilibrium is typically represented by a nonlinearly coupled pair of parabolic equations, where the first equation, a Hamilton-Jacobi-Bellman (HJB) equation, evolves backward in time, while the second equation, a Kolmogorov-Fokker-Planck (KFP) equation, progresses forward in time. This system of mean field games is structured as follows:
   \begin{equation}\label{eq:MFG0}
    \begin{cases}
        -\partial_t u -\Delta u + \mathcal{H}(x,t,\nabla u,m) = F(x,u,m) &\quad \text{in }Q:=\overline{\Omega}\times[0,T],\\
        \partial_t m -\Delta m - \nabla\cdot(m\nabla_p \mathcal{H}(x,t,\nabla u,m)) = 0  &\quad \text{in }Q\\
        u(x,T)=u_T,\quad m(x,0)=f(x)&\quad \text{in }\Omega.
    \end{cases}
\end{equation} 
Here, the unknowns $u$ and $m$ are scalar functions depending on time $t\in[0, T ]$ and state space $x\in\Omega$, where $\Omega$ is a bounded Lipschitz domain in the Euclidean space $\mathbb{R}^n$, with $n\in\mathbb{N}$. 
The state variable $x$ embodies various characteristics relevant to the problem at hand, such as physical location, social status, quantities of specific resources, among others.

Given the substantial population size, in the sense that the number of players approaches infinity, it is reasonable to consider just a mean-representative player. The MFG system \eqref{eq:MFG0} then captures the dynamics of this agent's behavior and the evolution of the population density in the following manner:
the convexity of the nonlinear Hamiltonian $\mathcal{H}$ with respect to the third variable means that the first HJB equation is associated with an optimal control problem and is interpreted as the value function $u$ associated with the mean-representative player. This representative player then interacts with the other players as a mass in the sense that the player does not consider the other players individually, but rather, simply the empirical density, denoted as $m$, of the positions of the players in the state space as a mass. Furthermore, as the number of players becomes very large, it is reasonable to assume that $m$ becomes deterministic and is unaffected by the behavior of any single player. Therefore, $m(\cdot,t)\in\mathcal{P}(\Omega)$, where $\mathcal{P}(\Omega)$ denotes the set of Borel probability measures on $\Omega$. 
In this system, $F$ represents the running cost function signifying agent-population interactions; 
$f$ denotes the initial population distribution, with all functions being real-valued.

The characterization of system \eqref{eq:MFG0} as a Nash equilibrium can be understood as follows: 
given that an individual player is ``infinitesimal" in comparison to the collective of other agents, his/her deviation from optimal behavior does not alter population dynamics. Consequently, the behavior of other agents and their collective time-dependent distribution $m$ can be considered fixed after optimization. This concept aligns with the notion of Nash equilibrium, where all players adopt optimal strategies while assuming others' choices remain constant.

Henceforth, it is vital for the individual player to acquire an understanding of the optimal state of the population, while having full comprehension of his/her individual costs $F$, in order to decode the internal system and formulate their optimal strategy. Conversely, one can also inversely design and control the dynamics of the system, based on an expected output. In a previous work \cite{LiuLoZhang2024decodingMFG}, we have successfully determined various intrinsic components of the system \eqref{eq:MFG0}, including the running cost $F$ and terminal cost $u_T$.

In this work, we tackle an unresolved issue in \cite{LiuLoZhang2024decodingMFG}, following the scenario described in that work. Let us briefly recall the setup of the inverse problem in \cite{LiuLoZhang2024decodingMFG}. Assume that the system \eqref{eq:MFG0} admits a stable stationary solution $U:=(u_0, m_0)$, the exact definition of which will be given later in Definition \ref{stable}. We only have access to information about $(u_0, m_0)$ on the boundary $\Sigma=\partial\Omega$. However, the values of $u_0$ and $m_0$ within the interior of $\Omega$ remain unknown, since the interior of $\Omega$ is inaccessible. Then for the inverse problem, we perturb these boundary values as \[u_0|_{\Sigma\times[0,T]}+\epsilon \tilde{u}, \quad m_0|_{\Sigma\times[0,T]}+\epsilon \tilde{m},\] where $\epsilon$ is a sufficiently small positive constant, and $\tilde{u},\tilde{m}\in C^{2+\alpha,1+\frac{\alpha}{2}}(Q)$. Corresponding to these perturbed boundary data, the MFG system yields a solution $(u_{0,\epsilon},m_{0,\epsilon})$ in $\tilde{\Omega}\times[0,T]$, which remains unknown in the interior of $\Omega$, but known on $\Sigma$. This known data essentially constitutes the measured Cauchy dataset:
\begin{equation}\mathcal{M}_\tau=\left.(u_{0,\epsilon}(x,t),m_{0,\epsilon}(x,t)\text{ and their related functions for $\epsilon \in (0,\tau)$})\right|_{\Sigma\times[0,T]},\end{equation} for $\tau$ sufficiently small. 
The inverse decoding problem is then given by the following operator equation:
\begin{equation}\label{opeq}\mathcal{F}(X)=\mathcal{M},\end{equation}
where $X$ denote the target objects that the individual player wants to decipher. In the context of optimal control, \eqref{opeq} can also be viewed as the inverse design of $X$ based on the expected output $\mathcal{M}$.

In a practical context, we aim to decode an MFG system with unknowns $X$ when the boundary of its state domain is accessible, yet the interior remains inaccessible. A viable approach involves waiting for the MFG system to attain its stationary state $U=(u_0, m_0)$, and then apply the boundary perturbations, namely $u_0|_{\Sigma\times[0,T]}+\epsilon \tilde{u}$, $m_0|_{\Sigma\times[0,T]}+\epsilon \tilde{m}$, to generate the dynamically perturbed Cauchy dataset on the boundary.  In fact, from a practical point of view, in order to unveil the underlying dynamics of an evolutionary process, a proper strategy is to generate dynamical perturbations around its stationary state and measure the corresponding perturbed data for the decoding and inversion processes. On the one hand, it is unobjectionable that there exist stationary states of generic dynamical processes since they assume the least energy. On the other hand, in order to retain the inherent structural information of the target dynamics, one should exert ``small" perturbations since large ones may generate unstable or chaotic dynamical behaviours. Yet, there are two more factors which will be crucial in deciding whether such a decoding strategy work or not. First, what type of dynamically perturbed data should be used for the decoding process and how to collect them? Second, are the data sufficient to recover the unknowns? We provide affirmative answers to these issues in the current article for decoding mean field games.

In fact, in Theorem 2.9 of \cite{LiuLoZhang2024decodingMFG} we recovered  $F$ with the Cauchy dataset $\mathcal{M}$. 
Here, the stable stationary solutions $U=(u_0,m_0)$ are fixed but unknown, and we can only measure the boundary data of their perturbed states $(u_{0,\epsilon}(x,t),m_{0,\epsilon}(x,t))$. We utilize such data to determine the dynamical function parameters $F$ and $u_T$, independent of the unknown fixed $U$. In the sense of \eqref{opeq}, this result in  Theorem 2.9 of \cite{LiuLoZhang2024decodingMFG} can be discussed as follows: 
Suppose there are two configurations $F_1$ and $F_2$, then the unique identifiability result can be written as 
\begin{equation}
    F_1=F_2 \quad \text{ if and only if }\mathcal{F}_{U}(F_1)=\mathcal{F}_{U}(F_2)
\end{equation}
for 
\begin{equation}
    \mathcal{F}_{U}(F)=\mathcal{M} \quad \text{ for an unknown but fixed }U.
\end{equation}

However, this is only a partial uniqueness result, with the a priori information that the stationary state $U$ is fixed. Yet, due to the nonlinearity of the system \eqref{eq:MFG0}, the system may possess more than one stationary state $U$. Hence, it will significantly benefit individual players if they can uniquely decipher the unknown stationary solution $U$ simultaneously, which also corresponds to the state as the time horizon extends towards infinity, i.e. $T\to\infty$.

Therefore, this study aims to tackle the outstanding issue presented in \cite{LiuLoZhang2024decodingMFG} to obtain the full unique identifiability result, by simultaneously recovering the stable stationary state $U$, in addition to the running cost $F$. Moreover, we made a further advancement to recover the Hamiltonian $\mathcal{H}$ that is more general than what was previously considered. Our approach involves utilizing the measured Cauchy dataset $\mathcal{M}$, i.e. 
\begin{equation}
    \mathcal{F}(U,F,\mathcal{H})=\mathcal{M}.
\end{equation}
In particular, we show the following unique identifiability result: For two configurations $(U_1,F_1,\mathcal{H}_1)$ and $(U_2,F_2,\mathcal{H}_2)$, it holds that 
\begin{equation}\label{formalresult}
U_1=U_2 \text{ and }F_1=F_2 \text{ and }\mathcal{H}_1=\mathcal{H}_2 \quad \text{ if and only if }\quad\mathcal{F}(U_1,F_1,\mathcal{H}_1)=\mathcal{F}(U_2,F_2,\mathcal{H}_2).\end{equation}
This comprehensive understanding of the population's and individual's general state will allow the player to anticipate the general behavior of the other players ($m_0$) in response to his/her actions ($u_0$), thereby empowering the agent to make more informed decisions when devising an optimal strategy. In this manner, we achieve a complete recovery of both the dynamical model ($F$ and $\mathcal{H}$) as well as the stationary states $U$. 
Further elaboration on these measurement maps and outcomes will be presented in the subsequent sections.

The major novelty that distinguishes our inverse problem study from most of the existing ones lies in the following two key aspects. Firstly, as in \cite{LiuLoZhang2024decodingMFG}, we consider MFG problems in a highly general form \eqref{eq:MFG0}, with a Hamiltonian that is more general than what was previously considered. We derive results pertaining to this form, thereby obtaining comprehensive analytical results that are applicable to this broad class of MFGs. This generality bolsters the practical significance of our discoveries and broadens their practical utility. 
Secondly, we have enhanced and refined the technique of linearization around the stationary states of the MFG system. As before, we assume that these stationary states are fixed but are not known a priori. Yet, in this work, we are able to fully derive these stationary states in the general time-dependent case, in addition to recovering the running cost $F$ and Hamiltonian $\mathcal{H}$. 
This technical advancement enables us to tackle the decoding problem in situations where the exact stationary states remain unknown, marking a significant breakthrough even within the broader domain of inverse problems for nonlinear PDEs. Consequently, this research lays the foundation for addressing inverse problems for nonlinear PDEs with general stationary states and furnishes a valuable tool for practical real-world applications.
Further discussion on these aspects will be elaborated in Section \ref{sect:technical}.

\subsection{Technical Developments and Discussion }\label{sect:technical}

Mean-field games (MFGs) is a powerful tool for analyzing intricate systems involving numerous rational decision-makers. Initially introduced in seminal works such as \cite{huang2006large, LasryLions1, LasryLions2}, MFGs offers a macroscopic perspective for various emergent phenomena and strategic interactions among large groups of individuals, in a wide spectrum of fields including transport, social sciences, finance and economics. This robust mathematical framework has therefore garnered significant attention and burgeoned into a vibrant research field.

The analysis of the well-posedness of the MFG system remains a key focus and an active area of research in diverse contexts. Noteworthy references such as \cite{ABC2017StationaryMFG,Ambrose2022existence,cardaliaguet2010notes,cardaliaguet2015weak,cardaliaguet2019master,cirant2020short,FerreiraGomesTada2019StationaryMFGDirichlet,ferreira2021existence} delve into this challenge, commonly referred to as the forward problem.

Given the broad spectrum of practical applications, exploring these real-world problems from an inverse perspective, such as the inverse design and decoding of MFGs, is both fascinating and practically significant. Decoding MFGs refers to the process of unraveling the underlying system dynamics and parameters of the mean-field game system from observed data. 
This pursuit holds pivotal implications across disciplines like economics, social sciences, and engineering, where uncovering the hidden structure of MFGs can furnish valuable insights and perspectives into the system dynamics and collective behaviors, empowering individuals to embrace more optimal strategies.

In this work, we focus on the recovery of both the intrinsic stable state $(u_0,m_0)$ of the MFG system \eqref{eq:MFG0}, as well as interaction factors such as the running cost $F$. These problems are known as the inverse problems for MFGs. Despite their practical importance and wide-ranging utility, such problems are far less studied in the literature. 
In \cite{LiuMouZhang2022InversePbMeanFieldGames,LiuZhang2022-InversePbMFG,LiuZhangMFG3,ding2023determining,LiuZhangMFG4,liulo2024determiningstatespaceanomalies,ding2024determininginternaltopologicalstructures}, MFG inverse problems were proposed and investigated with several novel unique identifiability results established, and in \cite{klibanov2023lipschitz,klibanov2023mean1,klibanov2023mean2,klibanov2023holder,liu2023stability,imanuvilov2023lipschitz1,imanuvilov2023unique,klibanov2023coefficient1,klibanov2023coefficient2,imanuvilov2023global} both uniqueness and stability results were derived in a variety of settings that required not only boundary measurements but also internal initial- and final-time measurements. In addition, we would also like to note that there are some numerical results \cite{chow2022numerical,ding2022mean,klibanov2023convexification} as well as some recent developments such as \cite{ren2023unique,ren2024reconstructing}. 
Notably, most of these works in \cite{ding2023determining,LiuMouZhang2022InversePbMeanFieldGames,LiuZhang2022-InversePbMFG,LiuZhangMFG3,ren2023unique,ren2024reconstructing,LiuZhangMFG4} employ the construction of complex geometric optics (CGO) solutions to establish uniqueness results. However, the construction of such CGO solutions becomes significantly more challenging when considering the probability density constraint on $m$ and the Neumann boundary conditions. Several attempts have been made to address this challenge, including \cite{LiuZhang2022-InversePbMFG,LiuZhangMFG3,LiuZhangMFG4,ren2024reconstructing}. 
In this work, we follow the approach of \cite{LiuLoZhang2024decodingMFG}, in that we do not impose any boundary conditions on the system \eqref{eq:MFG0}. Instead, we consider the set of all possible solutions $(u,m)$ to the system. This can be elucidated through practical scenarios: In a stock market where trading occurs via diverse platforms and locations, considering solely on online transactions from one country in a multi-national trading system may obviate the need for adhering strictly to probability density constraints and Neumann boundary conditions. Similar reasoning applies to multinational tech companies operating within a single nation or airport management scenarios involving varied airline operations. In this case, there are no probability density constraint and Neumann boundary conditions.

Indeed, the physical constraints associated with the MFG system \eqref{eq:MFG0}, including boundary conditions, positivity and probability density constraints, present intriguing and challenging technical novelties from a mathematical perspective. In addition, a major technical complexity is the nonlinear backward-forward coupling nature of the MFG system \eqref{eq:MFG0}. To address the nonlinearity, a robust strategy involving successive/high-order linearization is being used. Such a method has been extensively developed for various inverse problems related to nonlinear partial differential equations (PDEs), including in \cite{LinLiuLiuZhang2021-InversePbSemilinearParabolic-CGOSolnsSuccessiveLinearisation,LiuZhang2022-InversePbMFG,LiuZhangMFG3,LiuZhangMFG4,liu2023determining,li2023inverse,li2024inverse,LiLoCAC2024}. Among several technical advancements, a key idea in \cite{LiuLoZhang2024decodingMFG} is to linearize around any solution, including non-trivial ones that are not known a priori. This means that the linearized systems remain coupled in nature, unlike the approach in \cite{LiuMouZhang2022InversePbMeanFieldGames} where the linearization is conducted around the trivial solution $(0,0)$ allowing for decoupling. Yet, in \cite{LiuLoZhang2024decodingMFG}, the stationary solution was only derived in the case of the stationary MFG. In this work, we treat the more general time-dependent case. We perform linearization around any unknown stable stationary solution, successfully recovering not only the higher-order Taylor coefficients of $F$, but also the unknown stable stationary solution in the process. This in particular, extends the work of \cite{LiuLoZhang2024decodingMFG}. 

The challenges highlighted are not exclusive to MFG inverse problems. Similar constraints and hurdles manifest in a variety of coupled or uncoupled PDE systems, as exemplified in \cite{LinLiuLiuZhang2021-InversePbSemilinearParabolic-CGOSolnsSuccessiveLinearisation,liu2023determining,li2023inverse,LiLoCAC2024,li2024inverse,DingLiuLo2024inverseproblemscouplednonlocal}. In these works, high-order variation and high-order/successive linearization schemes around various trivial and non-trivial solutions have also been applied to investigate the associated inverse boundary problems. We believe that the mathematical techniques and strategies developed in this work can be applied to these innovative and compelling contexts, enabling the recovery of the unknown stationary state of the system in addition to function parameters in the system. This offers novel perspectives on inverse boundary problems for nonlinear PDEs, with the potential to yield theoretical and practical outcomes of significant importance.

The subsequent sections of this manuscript are structured as outlined below. Section \ref{sect:prelim} presents some preliminary results, establishing the primary framework and stating the main theorem. In Section \ref{sec:wellpose}, we briefly recall the local well-posedness of the forward MFG system. In Section \ref{sec:linearzation_discussion}, we detail the high-order linearization method, which is necessary for the proof of the main result. The proof of the main theorem is finally presented in Section \ref{sec:mainproof}.

\section{Preliminaries}\label{sect:prelim}

\subsection{Admissible class}\label{sec:admissible}
The H\"older space $C^{k+\alpha}(\overline{\Omega})$, $0<\alpha<1$, $k\in\mathbb{N}$, is defined to be the subspace of $C^{k}(\overline{\Omega})$, such that a function lies in $C^{k+\alpha}(\overline{\Omega})$ if and only if, for all $l=(l_1,l_2,\ldots,l_n)\in \mathbb{N}^n$ with $|l|\leq k$, the derivatives $D^l:=\partial_{x_1}^{l_1}\partial_{x_2}^{l_2}\cdots\partial_{x_n}^{l_n}$ exist and are H\"older continuous with exponent $\alpha$. The corresponding norm is given by
	\begin{equation}
		\|\psi\|_{C^{k+\alpha}(\overline{\Omega}) }:=\sum_{|l|=k}\sup_{x\neq y}\frac{|D^l\psi(y)-D^l\psi(x)|}{|y-x|^{\alpha}}+\sum_{|l|\leq k}\|D^l\psi\|_{L^\infty(\Omega)}.
	\end{equation}
For functions depending on both time and space variables, we define the space $C^{k+\alpha, \frac{k+\alpha}{2}}(Q)$ if the derivatives $D^lD^{j}_t$ exist and are H\"older continuous with exponent $\alpha$ in $x$ and $\frac{k+\alpha}{2} $ in $t$ for all  $l\in \mathbb{N}^n$, $j\in\mathbb{N}$ with $|l|+2j\leq k.$ The norm is defined as
	\begin{equation}
		\begin{aligned}
			\|\psi\|_{ C^{k+\alpha, \frac{k+\alpha}{2}}(Q)}:&=\sum_{|l|+2j= k}\sup_{t,x\neq y}\frac{|\psi(y,t)-\psi(x,t)|}{|y-x|^{\alpha}}\\
			&+\sum_{|l|+2j= k}\sup_{t\neq t',x} \frac{|\psi(x,t')-\psi(x,t)|}{|t'-t|^{\alpha/2}}+\sum_{|l|+2j\leq k}\|D^lD^j_t\psi\|_{L^\infty(Q)}.
		\end{aligned}
	\end{equation}
    
Now we establish sufficient conditions to guarantee that the uniqueness result \eqref{formalresult} holds. 
As discussed in the introduction, the Hamiltonian $\mathcal{H}$ in \eqref{eq:MFG0} adopts a convex nonlinear form. In this work, we assume that the $\mathcal{H}$ is a quadratic form, i.e. for a smooth Riemannian metric $A=(g_{kj}(x))_{k,j=1}^n$, the Hamiltonian $\mathcal{H}$ defined in the phase space is of the form
\begin{equation}\label{eq:ham1}
\mathcal{H}(x,p)=\sum_{kj}g_{kj}p_kp_j,
\end{equation}
where $p=\nabla u(x, t)$ is the momentum variable. This signifies that the energy Lagrangian of the MFG system is kinetic, which is often found in practical situations (cf. \cite{LiuMouZhang2022InversePbMeanFieldGames,ding2022mean} ). 
\begin{definition}\label{def:A}
    A Riemannian metric $A$ is called admissible if it belongs to some fixed known conformal class $C_g$, i.e. $A(x)=\kappa(x)g(x)$ for $\kappa(x)\in C^\infty(\Omega)$ and $g(x)$ is a known Riemannian metric.
\end{definition}
Next, we introduce the admissible class of $F$, in a similar way as in \cite{LiuMouZhang2022InversePbMeanFieldGames,LiuZhangMFG3,LiuZhangMFG4,LiuLoZhang2024decodingMFG}. For the completeness of this paper, we list it here.
\begin{definition}\label{AdmissClass1}
	We say that  $Y(x,z):\mathbb{R}^n\times\mathbb{C}\to\mathbb{C}$ is admissible, denoted by $Y\in\mathcal{A}$, if the following two conditions are satisfied:
	\begin{enumerate}[label=(\roman*)]
		\item The map $z\mapsto Y(\cdot,z)$ is holomorphic with value in $C^{2+\alpha}(\mathbb{R}^n)$ for some $\alpha\in(0,1)$;
			\item $Y(x,m_0(x))=0$ for all $x\in\mathbb{R}^n$ and given $m_0(x)$ depending only on $x$;
            \item $Y^{(1)}(x)=0$ for all $x\in \Omega$.
		\end{enumerate}

		Clearly, if the conditions (i) and (ii) are fulfilled, $A$ can be expanded into a power series as follows:
		\begin{equation}\label{eq:G}
			Y(x,z)=\sum_{k=1}^{\infty} Y^{(k)}(x)\frac{(z-m_0)^k}{k!},
		\end{equation}
		where $ Y^{(k)}(x)=\frac{\partial^kY}{\partial z^k}(x,m_0)\in C^{2+\alpha}(\mathbb{R}^n).$
\end{definition}

The admissibility conditions in Definition \ref{AdmissClass1} shall be imposed as a priori conditions on $F$, as in \cite{LiuLoZhang2024decodingMFG}. 
These assumptions imply that $F$ takes a polynomial form, a common occurrence in real-world scenarios. Specifically, Assumption (iii) stipulates that $F$ is minimally of quadratic order, a characteristic often observed in practical settings where players mimic others (see, for instance, \cite{BardiQuadraticCostMFG,HuangHuangQuadraticCostMFG,BausoPesentiQuadraticCostMFG,USGQuadraticCostMFG}). Hence, from a practical point of view, these analytic assumptions on $F$ are unobjectionable.

\subsection{Main unique identifiability results}

Having established these definitions, we state our main result, whereby we fully recover the running cost as well as the stationary state of the quadratic MFG system
\begin{equation}\label{eq:MFG2}
    \begin{cases}
        -\partial_t u(x,t) -\Delta u(x,t) + [\nabla u(x,t)]^TA(x)\nabla u(x,t) = F(x,m) &\quad \text{in }Q,\\
        \partial_t m(x,t) -\Delta m(x,t) - 2\nabla\cdot\left(m(x,t)[\nabla u(x,t)]^TA(x)\right) = 0  &\quad \text{in }Q,\\
        u(x,T)=u_T,\quad m(x,0)=f(x) &\quad \text{in }\Omega,
    \end{cases}
\end{equation} 
for bounded positive real function $(x,t)>0$, $(x,t)\in C^{1,0}(\tilde{Q})$.

Consider the measurement map 
$\mathcal{M}_{F,U,A}$ given by \[\mathcal{M}_{F,U,A}=\left.\left(u,\nabla u,m,\nabla m\right)\right|_{\Gamma}\to F,U,A \]
associated to \eqref{eq:MFG2}, where $U=(u_0,m_0)$ and $\Gamma:=\Sigma\times(0,T)$. 
First, we define
\begin{definition}\label{stable}
Let $F\in\mathcal{A}$. Let $g,h\in C^{2+\alpha,1+\frac{\alpha}{2}}(\Gamma)$ satisfy the following  compatibility conditions:
\begin{equation}\label{compatibility conditions }
\begin{cases}
     g(x,T)=0&\quad \text{ in }\Sigma\\
      -\partial_t g -\Delta g + [\nabla u_0]^TA\nabla g+ [\nabla g]^TA\nabla u_0=0 &\quad \text{ in }\Sigma\\
     h(x,0)=0&\quad \text{ in }\Sigma\\
     \partial_t h -\Delta h - 2\nabla\cdot\left( m_0[\nabla g]^TA\right) -2\nabla \cdot \left( h[\nabla u_0]^TA\right)= 0 &\quad \text{ in }\Sigma\\
\end{cases}
\end{equation}

    A solution $U=(u_0,m_0)$ is called a \emph{stable solution} of \eqref{eq:MFG2} if for any
    $g,h$ that satisfy the compatibility conditions, the system
  \begin{equation}\label{MFG2:stable}
    \begin{cases}
        -\partial_t u^{(1)}(x,t) -\Delta u^{(1)}(x,t) + [\nabla u_0(x)]^TA(x) \nabla u^{(1)}(x,t)\\\quad\quad\quad\quad\quad\quad\quad\quad\quad\quad\quad\quad + [\nabla u^{(1)}(x,t)]^TA(x) \nabla u_0(x)= 0 &\quad \text{in }Q,\\
        \partial_t m^{(1)}(x,t) -\Delta m^{(1)}(x,t) - 2\nabla\cdot\left( m_0(x)[\nabla u^{(1)}(x,t)]^TA(x)\right) \\\quad\quad\quad\quad\quad\quad\quad\quad\quad\quad\quad\quad -2\nabla \cdot \left( m^{(1)}(x,t)[\nabla u_0(x)]^TA(x)\right)= 0  &\quad \text{in }Q,\\
        u^{(1)}(x,t)=g,\quad m^{(1)}(x,t)=h&\quad \text{in }\Gamma,\\
        u^{(1)}(x,T)=0, \quad m^{(1)}(x,0) = 0  &\quad \text{in }\Omega.
    \end{cases}
\end{equation}
admits a unique solution $(u,m)\in [ C^{2+\alpha,1+\frac{\alpha}{2}}(Q)]^2.$
\end{definition}

\begin{definition}
    A solution $U=(u_0,m_0)$ is a \emph{stationary solution} of \eqref{eq:MFG2} if it satisfies the following system
    \begin{equation}\label{MFG2Linear0}
    \begin{cases}
        -\Delta u_0(x) + [\nabla u_0(x)]^TA(x) \nabla u_0(x)= 0&\quad \text{in }Q,\\
        -\Delta m_0(x) - 2\nabla\cdot\left( m_0(x)[\nabla u_0(x)]^TA(x)\right)= 0  &\quad \text{in }Q.
    \end{cases}
\end{equation}
\end{definition}

    Then, we are able to reconstruct the running costs, Hamiltonian and stable stationary state solutions, through measurements for general analytic cost functions, and the following uniqueness result holds:

\begin{theorem}\label{mainthm}
For $i=1,2$, let $A_i$ be an admissible Riemannian metric and $F_i\in\mathcal{A}$ such that $U_i=(u_{0,i},m_{0,i})\in [C^{2+\alpha}(\Omega)]^2$ is a stable stationary solution of the following system:
   \begin{equation}\label{eq:MFG2i}
    \begin{cases}
        -\partial_t u(x,t) -\Delta u(x,t) + [\nabla u(x,t)]^TA_i(x)\nabla u(x,t) = F_i(x,m) &\quad \text{in }Q,\\
        \partial_t m(x,t) -\Delta m(x,t) - 2\nabla\cdot\left(m(x,t)[\nabla u(x,t)]^TA_i(x)\right) = 0  &\quad \text{in }Q,\\
        u(x,T)=u_{T,i},\quad m(x,0)=f_i(x) &\quad \text{in }\Omega,
    \end{cases}
\end{equation}
such that the Hamiltonian $\mathcal{H}_i=[\nabla u]^TA_i\nabla u$ is non-degenerate at the stationary state $U_i$. 
 Let $\mathcal{M}_{F_i,A_i,U_i}$ be the associated measurements. Suppose \[\mathcal{M}_{F_1,A_1,U_1}=\mathcal{M}_{F_2,A_2,U_2},\] 
     Then, one has \[F_1=F_2,\quad \quad U_1=U_2\]
     and 
     \[A_1(x)=A_2(x)\quad \text{ in }Q,\]
     up to the conformal class $C_g$.
\end{theorem}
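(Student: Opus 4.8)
The plan is to use a high-order linearization scheme around the unknown stable stationary state $U_i=(u_{0,i},m_{0,i})$, exploiting the admissibility structure of $F_i$ (its Taylor expansion about $m_0$) together with the quadratic form of the Hamiltonian. First I would introduce a boundary perturbation with several independent parameters, writing the boundary data as $u_0|_\Gamma+\sum_{\ell=1}^N \epsilon_\ell g_\ell$ and $m_0|_\Gamma+\sum_{\ell=1}^N \epsilon_\ell h_\ell$ where each $(g_\ell,h_\ell)$ satisfies the compatibility conditions of Definition \ref{stable}, so that by the stability hypothesis the perturbed forward problem \eqref{eq:MFG2i} is well-posed for small $\bm{\epsilon}=(\epsilon_1,\dots,\epsilon_N)$, producing a solution $(u_{\bm\epsilon},m_{\bm\epsilon})$ depending smoothly on $\bm\epsilon$. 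Differentiating in $\epsilon_\ell$ at $\bm\epsilon=0$ and matching orders yields a cascade of linear systems. The first-order system is precisely \eqref{MFG2:stable}, which is linear in $(u^{(1)},m^{(1)})$ with coefficients built from $U_i$ and $A_i$; higher-order systems have the same principal linear operator but with source terms that are polynomial in the lower-order derivatives and that encode the Taylor coefficients $F_i^{(k)}$ and the metric $A_i$.

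The key observation is that the measurement map records the full Cauchy data $(u,\nabla u,m,\nabla m)|_\Gamma$ of every perturbed solution, hence of every order-$k$ linearized solution $(u^{(k)},m^{(k)})$. The strategy then proceeds by induction on the linearization order. Writing $v:=u^{(k)}_1-u^{(k)}_2$ and $w:=m^{(k)}_1-m^{(k)}_2$ for the differences of the order-$k$ linearized solutions, one obtains a linear coupled parabolic system for $(v,w)$ whose inhomogeneity is supported on the mismatch of the unknowns ($U_1$ vs.\ $U_2$, $A_1$ vs.\ $A_2$, and the Taylor coefficients), and whose Cauchy data on $\Gamma$ vanish by the equality of measurements. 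At the lowest order I would first extract the stationary states: subtracting the two stationary systems \eqref{MFG2Linear0} and using that the boundary traces agree, I would invoke a unique continuation principle (UCP) for the coupled linear parabolic operator — this is where the non-degeneracy of the Hamiltonian at $U_i$ is essential, since it guarantees the principal part of the linearized operator is uniformly elliptic/parabolic and the coupling is of lower order. The vanishing Cauchy data then force $v\equiv w\equiv 0$ in the interior, giving $u_{0,1}=u_{0,2}$ and $m_{0,1}=m_{0,2}$, i.e.\ $U_1=U_2$. Simultaneously, the first-order system identifies $A_1=A_2$ up to the conformal class $C_g$, by testing against a rich enough family of linearized solutions and reading off the principal symbol (the standard conformal-class ambiguity for anisotropic second-order coefficients).

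With $U_1=U_2$ and $A_1=A_2$ fixed, the remaining task is to recover $F$. Here I would run the induction in $k$: assuming $F_1^{(j)}=F_2^{(j)}$ for all $j<k$, the order-$k$ linearized difference system has a source proportional to $(F_1^{(k)}-F_2^{(k)})$ times an explicit product of the already-determined lower-order solutions, with vanishing Cauchy data on $\Gamma$. Choosing the perturbation directions $(g_\ell,h_\ell)$ so that these products form a complete set (density of products of solutions, as is standard in the successive-linearization literature such as \cite{LinLiuLiuZhang2021-InversePbSemilinearParabolic-CGOSolnsSuccessiveLinearisation,LiuLoZhang2024decodingMFG}), another application of UCP and the completeness argument forces $F_1^{(k)}(x)=F_2^{(k)}(x)$ for all $x\in\Omega$. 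Since $F_i\in\mathcal{A}$ is determined by its Taylor series \eqref{eq:G} about the common $m_0$, matching all coefficients yields $F_1=F_2$, and the recovery of $\mathcal{H}_i=[\nabla u]^TA_i\nabla u$ follows from $A_1=A_2$.

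I expect the main obstacle to be the simultaneous recovery of the \emph{unknown} stationary state $U_i$ together with the coefficients, since $U_i$ appears inside the coefficients of the linearized operator itself — the difference system for $(v,w)$ is therefore not a clean linear equation in a single unknown but couples the state mismatch and the coefficient mismatch. The delicate point is to decouple these contributions and to justify the UCP for the resulting coupled backward-forward parabolic system with the vanishing Cauchy data; establishing the required unique continuation and the density/completeness of the products of linearized solutions under the quadratic Hamiltonian, while carefully tracking the conformal-class ambiguity in $A$, will be the technical heart of the argument.
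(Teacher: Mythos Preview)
Your overall architecture (successive linearization around $U_i$, matching Cauchy data order by order, unique continuation, then induction on the Taylor coefficients of $F$) coincides with the paper's, and your diagnosis of the main obstacle is exactly right: at first order the difference equation couples the state mismatch with the coefficient mismatch. But your proposed resolution of that obstacle does not work, and this is a genuine gap.

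Concretely, subtracting the stationary systems \eqref{MFG2Linear0} does not produce a linear equation to which a UCP applies: the term $[\nabla u_{0,1}]^TA_1\nabla u_{0,1}-[\nabla u_{0,2}]^TA_2\nabla u_{0,2}$ mixes the unknown $u_{0,1}-u_{0,2}$ with the unknown $A_1-A_2$, and neither is a lower-order perturbation of the other. Likewise, at first order the difference $\tilde u=u^{(1)}_1-u^{(1)}_2$ satisfies
\[
-\partial_t\tilde u-\Delta\tilde u+q_1\cdot\nabla\tilde u+(q_1-q_2)\cdot\nabla u^{(1)}_2=0,\qquad q_i:=2A_i\nabla u_{0,i},
\]
so vanishing Cauchy data for $\tilde u$ alone cannot force $q_1=q_2$ without an additional mechanism that varies $u^{(1)}_2$ over a rich family. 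Your suggestion to ``read off the principal symbol'' to identify $A_i$ also fails: the principal part of the first-order linearized operator is just $-\partial_t-\Delta$; the metric $A_i$ enters only through the first-order drift $q_i$, not the symbol. The non-degeneracy hypothesis does not make the principal part anisotropic; it is used later to divide out $\nabla u_0$ when extracting the conformal factor.

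The missing idea is this. Because $F_i\in\mathcal A$ forces $F_i^{(1)}=0$, the first-order HJB equation for $u^{(1)}_i$ is \emph{decoupled} from $m^{(1)}_i$: it is a single backward parabolic equation with unknown drift $q_i$. The paper then builds complex-geometric-optics solutions $u^{(1)}_2$ and adjoint test functions $w$ with phase $\psi=\rho^2 t+\rho\,\zeta\cdot x$ (Theorem~\ref{thm:cgo}), multiplies the difference equation by $w$, integrates by parts, and passes to the limit $\rho\to\infty$. The surviving term is the Fourier transform of $(q_1-q_2)\cdot\zeta$, which vanishes for all $\zeta\in\mathbb S^{n-1}$, giving $q_1=q_2$. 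Only \emph{after} this step does unique continuation apply: with $q$ known, the stationary equations become a fixed elliptic equation for $u_{0,i}$ (then for $m_{0,i}$) with matching Cauchy data on $\Sigma$, and elliptic UCP yields $U_1=U_2$; the conformal-class assumption then converts $q_1=q_2$ and $\nabla u_{0,1}=\nabla u_{0,2}$ into $A_1=A_2$. Your induction for $F$ is essentially correct once $U$ and $A$ are fixed, though the paper uses the coupled parabolic UCP (Theorem~\ref{UCP}) on the order-$k$ difference system directly rather than a density-of-products argument.
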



As discussed in the introduction, we consider a stable stationary state $U$ of the time-dependent MFG system \eqref{eq:MFG2}, and measure the time-dependent Cauchy dataset of its different perturbations. However, the stable stationary state $U$ remains unknown. Our main novelty in this work, as compared to \cite{LiuLoZhang2024decodingMFG}, is the unique identifiability of this stable stationary solutions $U$ and Hamiltonian $\mathcal{H}$, in addition to the running costs $F$. Such a result has only previously been obtained in the stationary case, and in this work we derive the result for the general time-dependent case.

\section{Well-posedness of the forward problems}\label{sec:wellpose}
Before we begin our study, we first derive the well-posedness of the associated forward problems. This is necessary because the regularity of the solutions to the forward problems is crucial to our inverse problem study. We will be using the high order linearization method to treat the MFG system, and this necessitates the infinite differentiability of the system with respect to small variations around a given solution.

\begin{theorem}\label{wellpose}
    Let $F\in\mathcal{A}$ and suppose $(u_0,m_0)$ is a stable solution of \eqref{eq:MFG2}. Then,
    \begin{enumerate}[label=(\alph*)]	
		\item
		there exist constants $\delta>0$ and $C>0$ such that for any 
		\begin{equation*}
		\begin{aligned}
		 &g\in B_{\delta}( C^{2+\alpha,1+\frac{\alpha}{2}}(\Gamma)) :=\{f\in  C^{2+\alpha,1+\frac{\alpha}{2}}(\Gamma): \|f\|_{ C^{2+\alpha,1+\frac{\alpha}{2}}(\Gamma)}\leq\delta,(f(x,0)+m_0)|_{\Sigma}=0\},\\
          &h\in D_{\delta}( C^{2+\alpha,1+\frac{\alpha}{2}}(\Gamma)) :=\{f\in  C^{2+\alpha,1+\frac{\alpha}{2}}(\Gamma): \|f\|_{ C^{2+\alpha,1+\frac{\alpha}{2}}(\Gamma)}\leq\delta,(f(x,0)+v_0)|_{\Sigma}=0\},
		\end{aligned}    
		\end{equation*}

		the system 
  \begin{equation}\label{eq:MFG_wellpose}
    \begin{cases}
        -\partial_t u(x,t) -\Delta u(x,t) + [\nabla u(x,t)]^TA(x)\nabla u(x,t) = F(x,m) &\quad \text{in }Q,\\
        \partial_t m(x,t) -\Delta m(x,t) - 2\nabla\cdot\left( m(x,t)[\nabla u(x,t)]^TA(x)\right) = 0  &\quad \text{in }Q,\\
        u(x,t)=u_0|_{\Gamma  }+h,\quad m(x,t)=m_0|_{\Gamma}+g&\quad \text{in }\Gamma,\\
        u(x,T)=u_T,\quad m(x,0)=f(x) &\quad \text{in }\Omega.
    \end{cases}
\end{equation} 

  has a solution $(u,m)\in
		[C^{2+\alpha,1+\frac{\alpha}{2}}(Q)]^2$ which satisfies
		\begin{equation}
  \begin{aligned}
      	\|(u-u_0,m-m_0)\|_{ C^{2+\alpha,1+\frac{\alpha}{2}}(Q)}:&= \|u-u_0\|_{C^{2+\alpha,1+\frac{\alpha}{2}}(Q)}+ \|m-m_0\|_{C^{2+\alpha,1+\frac{\alpha}{2}}(Q)}\\  
    &\leq C(\|g\|_{ C^{2+\alpha,1+\frac{\alpha}{2}}(\Gamma)}+\|h\|_{  C^{2+\alpha,1+\frac{\alpha}{2}}(\Gamma)}).
  \end{aligned}
		\end{equation}
		Furthermore, the solution $(u,m)$ is unique within the class
		\begin{equation}
      \{ (u,m)\in  C^{2+\alpha,1+\frac{\alpha}{2}}(Q)\times C^{2+\alpha,1+\frac{\alpha}{2}}(Q): \|(u-u_0,m-m_0)\|_{ C^{2+\alpha,1+\frac{\alpha}{2}}(Q)}\leq C\delta \}.
		\end{equation}		
		\item Define a function 
		\[
		S: B_{\delta}( C^{2+\alpha,1+\frac{\alpha}{2}}(\Gamma)\times D_{\delta}( C^{2+\alpha,1+\frac{\alpha}{2}}(\Gamma)\to C^{2+\alpha,1+\frac{\alpha}{2}}(Q)\times C^{2+\alpha,1+\frac{\alpha}{2}}(Q)
		\] by \[S(g,h):=(u,m),\]
		where $(u,m)$ is the unique solution to the MFG system \eqref{eq:MFG_wellpose}.
		Then for any $(g,h)\in B_{\delta}( C^{2+\alpha,1+\frac{\alpha}{2}}(\Gamma))^2$, $S$ is holomorphic at $(g,h)$.
	\end{enumerate}
\end{theorem}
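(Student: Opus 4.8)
The plan is to recast the boundary value problem \eqref{eq:MFG_wellpose} as the problem of finding the zero of a nonlinear operator between parabolic Hölder spaces, and to apply the holomorphic implicit function theorem in Banach spaces, linearizing about the given stable solution $(u_0,m_0)$. Since $u_0,m_0$ depend only on $x$, their time derivatives vanish, so $(u_0,m_0)$ solves \eqref{eq:MFG2} with $u_T=u_0$ and $f=m_0$; we therefore seek the solution in the form $u=u_0+v$, $m=m_0+w$ with $(v,w)$ small. Working over complexified spaces so that holomorphy is meaningful, I would set
\[
\mathcal{X}_0:=\{(v,w)\in[C^{2+\alpha,1+\frac{\alpha}{2}}(Q)]^2:\ v(\cdot,T)=0,\ w(\cdot,0)=0\ \text{in}\ \Omega\},\quad \mathcal{P}:=[C^{2+\alpha,1+\frac{\alpha}{2}}(\Gamma)]^2,
\]
\[
\mathcal{Y}:=[C^{\alpha,\frac{\alpha}{2}}(Q)]^2\times\mathcal{P},
\]
and define $\Phi:\mathcal{P}\times\mathcal{X}_0\to\mathcal{Y}$ by collecting the two PDE residuals obtained after substituting $u=u_0+v$, $m=m_0+w$ into \eqref{eq:MFG2}, together with the lateral trace defects $v|_\Gamma-h$ and $w|_\Gamma-g$.

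First I would verify that $\Phi$ is well defined and holomorphic. The Hamiltonian contribution $[\nabla(u_0+v)]^TA\nabla(u_0+v)$ and the transport term $\nabla\cdot\big((m_0+w)[\nabla(u_0+v)]^TA\big)$ are polynomial in $(v,w,\nabla v,\nabla w)$ and hence map $[C^{2+\alpha,1+\frac{\alpha}{2}}(Q)]^2$ boundedly into $C^{\alpha,\frac{\alpha}{2}}(Q)$ via the standard Banach-algebra and product estimates for parabolic Hölder spaces; the reaction term $F(x,m_0+w)$ is holomorphic in $w$ as a $C^{\alpha,\frac{\alpha}{2}}(Q)$-valued map because $z\mapsto F(\cdot,z)$ is holomorphic with values in $C^{2+\alpha}(\mathbb{R}^n)$ by Definition \ref{AdmissClass1}(i), and composition with $m_0+w$ preserves holomorphy. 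Thus $\Phi$ is holomorphic. Moreover, since $(u_0,m_0)$ solves the stationary system \eqref{MFG2Linear0} and $F(x,m_0)=0$ by Definition \ref{AdmissClass1}(ii), the base-point residuals vanish, so $\Phi((0,0),(0,0))=0$.

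Next I would compute the partial Fréchet derivative $D_{(v,w)}\Phi$ at $((0,0),(0,0))$. Expanding the quadratic and transport terms to first order reproduces exactly the linear parabolic system of \eqref{MFG2:stable} together with the trace map $(\dot v,\dot w)\mapsto(\dot v|_\Gamma,\dot w|_\Gamma)$. The only prospective coupling term in the linearized HJB equation is $\partial_zF(x,m_0)\,\dot w=F^{(1)}(x)\,\dot w$, which vanishes by the admissibility condition Definition \ref{AdmissClass1}(iii); this is precisely why the linearized first equation carries no $\dot w$ term and matches \eqref{MFG2:stable}. The hypothesis that $(u_0,m_0)$ is a \emph{stable} solution (Definition \ref{stable}) states exactly that for every lateral datum satisfying the compatibility conditions \eqref{compatibility conditions }, the system \eqref{MFG2:stable} is uniquely solvable in $[C^{2+\alpha,1+\frac{\alpha}{2}}(Q)]^2$. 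Taking $g=h=0$ yields injectivity of $D_{(v,w)}\Phi|_0$, while surjectivity onto the compatible subspace of $\mathcal{Y}$ follows by lifting the prescribed lateral data into $Q$, absorbing it into an interior source, and then combining the solvability granted by Definition \ref{stable} with the linear parabolic Schauder estimates that handle interior sources. Hence $D_{(v,w)}\Phi|_0$ is a continuous linear bijection, and therefore a Banach-space isomorphism by the bounded inverse theorem.

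With the isomorphism established, the holomorphic implicit function theorem produces $\delta>0$ and a unique holomorphic map $S:(g,h)\mapsto(v,w)=(u-u_0,m-m_0)$ on $B_\delta\times D_\delta$ with $S(0,0)=(0,0)$ solving $\Phi((g,h),S(g,h))=0$; using $S(0,0)=0$ together with the boundedness of $(D_{(v,w)}\Phi|_0)^{-1}$ gives the stability estimate $\|(u-u_0,m-m_0)\|_{C^{2+\alpha,1+\frac{\alpha}{2}}(Q)}\le C(\|g\|_{C^{2+\alpha,1+\frac{\alpha}{2}}(\Gamma)}+\|h\|_{C^{2+\alpha,1+\frac{\alpha}{2}}(\Gamma)})$, and local uniqueness within the stated tube is part of the implicit function theorem conclusion. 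Statements (a) and (b) then follow at once, (b) being the asserted holomorphy of $S$. The main obstacle, in my view, is the careful bookkeeping of the \emph{corner compatibility conditions}: one must choose $\mathcal{X}_0$, $\mathcal{P}$, $\mathcal{Y}$ and the admissible sets $B_\delta,D_\delta$ so that lateral, terminal and initial data are mutually compatible at $\Sigma\times\{0\}$ and $\Sigma\times\{T\}$, for only then is the trace component of $D_{(v,w)}\Phi|_0$ surjective onto the correct compatible subspace, which is what makes Definition \ref{stable} deliver a genuine isomorphism rather than a merely injective map. Verifying the Hölder mapping and holomorphy properties of the nonlinearity is routine but must be carried out with care to ensure $\Phi$ lands in $C^{\alpha,\frac{\alpha}{2}}(Q)$.
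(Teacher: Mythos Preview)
Your proposal is correct and follows essentially the same route as the paper: define a nonlinear operator collecting the PDE residuals together with the lateral trace defects, verify well-definedness and holomorphy in parabolic H\"older spaces (using the algebra property and the Cauchy integral formula for the Taylor expansion of $F$), observe that the base point is a zero, identify the partial Fr\'echet derivative in the solution variables with the linearized system \eqref{MFG2:stable}, and invoke the holomorphic implicit function theorem. Your presentation is in fact more explicit than the paper's in several places, notably in invoking condition~(iii) of Definition~\ref{AdmissClass1} to explain why the linearized HJB equation carries no $\dot w$ term, and in flagging the corner-compatibility bookkeeping needed to make the linearized map a genuine isomorphism rather than merely injective.
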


\begin{proof}
	Let 
	\begin{align*}
        &X_0:=\{ f\in C^{2+\alpha}(\Omega): (f(x,0)+m_0(x,0))|_{\Sigma}=0 \}\\
        &X_0':=\{ f\in C^{2+\alpha}(\Omega): (f(x,T)+u_0(x,T))|_{\Sigma}=0  \}\\
        &X_1:=\{f\in  C^{2+\alpha,1+\frac{\alpha}{2}}(\Gamma ):  ((m_0+f)(x,0))|_{\Sigma  }=0\}\\
		&X_1':= \{ f\in  C^{2+\alpha,1+\frac{\alpha}{2}}(\Gamma ):  ((u_0+f)(x,T)|_{\Sigma})=0 \} , \\
        &X_2:=\{f\in  C^{2+\alpha,1+\frac{\alpha}{2}}(Q):f(x,0)_{\Sigma}=0,(-\partial_t f(x,0)-\Delta f(x,0))|_{\Sigma}=0 \}\\
        &X_2':=\{f\in  C^{2+\alpha,1+\frac{\alpha}{2}}(Q):f(x,T)_{\Sigma}=0,(-\partial_t f(x,T)+\Delta f(x,T)|_{\Sigma}=0 \}\\
		&X_3:=X_0\times X_0'\times [ \{ h\in C^{\alpha,\frac{\alpha}{2}}(Q):h(x,0)|_{\Sigma}=0\}]^2,
	\end{align*} and we define a map $\mathscr{L}:X_1\times X'_1\times X_2\times X_2' \to X_3$ by that for any $(g,h,\tilde u,\tilde m)\in X_1\times X'_1\times X_2\times X_2'$,
	\begin{align*}
		\mathscr{L}( g,h,\tilde u,\tilde m)(x,t)
		:=&\big( \tilde u(x,t)|_{\Gamma}-u_0|_{\Gamma}-g , \tilde m(x,t)|_{\Gamma}-m_0|_{\Gamma}-h , -\partial_t\tilde u(x,t)
		\\ &-\Delta \tilde u(x,t)+[\nabla \tilde u(x,t)]^TA(x)\nabla \tilde u(x,t)- F(x,t,\tilde m(x,t)), 
		\\ &\partial_t \tilde m(x,t)-\Delta \tilde m(x,t)-2\nabla\cdot(\tilde m(x,t)[\nabla \tilde u(x,t)]^TA(x))  \big) .
	\end{align*}

	First, we show that $\mathscr{L} $ is well-defined. Since the
	H\"older space is an algebra under the point-wise multiplication, we have $[\nabla u]^TA\nabla u, \nabla\cdot( m[\nabla u]^TA)  \in C^{\alpha,\frac{\alpha}{2}}(Q ).$
	By the Cauchy integral formula,
	\begin{equation}\label{eq:F1}
		F^{(k)}\leq \frac{k!}{R^k}\sup_{|z|=R}\|F(\cdot,\cdot,z)\|_{C^{\alpha,\frac{\alpha}{2}}(Q ) },\ \ R>0.
	\end{equation}
	Then there is $L>0$ such that for all $k\in\mathbb{N}$,
	\begin{equation}\label{eq:F2}
		\left\|\frac{F^{(k)}}{k!}m^k\right\|_{C^{\alpha,\frac{\alpha}{2}}(Q )}\leq \frac{L^k}{R^k}\|m\|^k_{C^{\alpha,\frac{\alpha}{2}}(Q)}\sup_{|z|=R}\|F(\cdot,\cdot,z)\|_{C^{\alpha,\frac{\alpha}{2}}(Q) }.
	\end{equation}
	By choosing $R\in\mathbb{R}_+$ large enough and by virtue of \eqref{eq:F1} and \eqref{eq:F2}, it can be seen that the series  converges in $C^{\alpha,\frac{\alpha}{2}}(Q )$ and therefore $F(x,t,m(x,t))\in  C^{\alpha,\frac{\alpha}{2}}(Q).$  This implies that $\mathscr{L} $ is well-defined. 
    
    Now we move to show that $\mathscr{L}$ is holomorphic. It suffices to verify that it is weakly holomorphic because  $\mathscr{L}$ is clearly locally bounded. In other words, we aim to show that the map
	$$\lambda\in\mathbb C \mapsto \mathscr{L}((g,h,\tilde u,\tilde m)+\lambda (\bar g,\bar h,\bar u,\bar m))\in X_3,\quad\text{for any $(\bar g,\bar h,\bar u,\bar m)\in X_1\times X_1'\times X_2\times X_2'$}$$
	is holomorphic. In fact, this follows from the condition that $F\in\mathcal{A}$ and $G\in\mathcal{B}$.

	Note that $ \mathscr{K}( 0,0,u_0,m_0)= 0$ because $(u_0,m_0)$ is a stable solution, so we have that $\nabla_{(\tilde u,\tilde m)} \mathscr{K} (0,0,u_0,m_0)$ is a linear isomorphism between $X_2\times X_2'$ and $X_3$. Hence, by the implicit function theorem, this theorem is proved.
\end{proof}

Furthermore, the maps of boundary data to solution are $C^{\infty}$-Fr\'{e}chet differentiable,
thus we can also derive that the corresponding Dirichlet-to-Neumann
 map is also $C^{\infty}$-Fr\'{e}chet differentiable.

\section{High order linearization}\label{sec:linearzation_discussion}
\subsection{Linearization in probability space}

Recall that the population distribution in \eqref{eq:MFG0} is denoted by the function $m$, which lies in the space $\mathcal{P}(\Omega) $, the set of probability measures on $\Omega$.
	
	\begin{definition}\label{def_der_1}
		Let $B :\mathcal{P}(\Omega)\to\mathbb{R}$ be a real function. We say that $B$ is of class $C^1$ if there exists a continuous map $K:  \mathcal{P}(\Omega)\times \Omega\to\mathbb{R}$ such that, for all $m_1,m_2\in\mathcal{P}(\Omega) $ we have
		\begin{equation}\label{derivation}
			\lim\limits_{s\to 0^+}\frac{B\big(m_1+s(m_2-m_1)-B(m_1)\big)}{s}=\int_{\Omega} K(m_1,x)d(m_2-m_1)(x).
		\end{equation}
	\end{definition}
Here, $K$ is defined up to additive constants. The derivative
	$\dfrac{\delta B}{\delta m}$ is then defined by the unique map $K$ satisfying $\eqref{derivation}$ such that 
	\begin{equation}
		\int_{\Omega} K(m,x) dm(x)=0.
	\end{equation}
	In a similar way, we can define the higher order derivatives of $B$ (see \cite{ricciardi2022master} for related discussions).
	The Wasserstein distance between $m_1$ and $m_2$ in $\mathcal{P}(\Omega)$ is defined as follows:
	\begin{definition}\label{W_distance}
		Let $m_1,m_2\in\mathcal{P}(\Omega)$. Define
		\begin{equation}
			d_1(m_1,m_2):=\sup_{Lip(\psi)\leq 1}\int_{\Omega}\psi(x)d(m_1-m_2)(x),
		\end{equation}
		where $Lip(\psi)$ denotes the Lipschitz constant for a Lipschitz function, i.e., 
		\begin{equation}\label{eq:Lip1}
			Lip(\psi)=\sup_{x, y\in\Omega, x\neq y}\frac{|\psi(y)-\psi(x)|}{|y-x|}. 
		\end{equation}
	\end{definition}
	
		We need to point out that in Definitions~\ref{def_der_1} and \ref{W_distance}, $m$ (i.e. $m_1$ or $m_2$) is viewed as a distribution. However, in other parts of the paper, we use $m$ to denote the density of a distribution such as in the MFG system \eqref{eq:MFG0}.

Finally, the relationship between the MFG system and its linearized system can be stated as follows:
 Let $(u_1, m_1)$ and $(u_2, m_2)$ be two solutions of the MFG system, associated with the starting initial conditions $m_{0}^1$
 and $m_{0}^2$. Let $(s,\rho)$ be the solution of the linearized system  related to $(u_2, m_2)$, with initial condition $m_{0}^1-m_{0}^2$. Then we have 
the norms of $u_1-u_2-s$ and $u_1-u_2-\rho$ in suitable function spaces are bounded by 
$Cd_1(m^1_{0},m_0^2)$. For details of this theorem we also refer to \cite{ricciardi2022master}. However, since we work in H\"{o}lder spaces in this paper and assume the measure $m$  always belongs to $C^{2+\alpha}(\Omega)$ at least, this result is replaced by Theorem $\ref{wellpose}$.

Next, we write out the linearized systems corresponding to our system \eqref{eq:MFG2} in the next subsection.

\subsection{High order linearized systems}
Since the local well-posedness is known from Theorem \ref{wellpose}, we are ready to conduct the high order linearization. 
Let 
\begin{equation}
\begin{aligned}
      &u(x,t)|_{\Gamma}=u_0|_{\Gamma}+\sum_{l=1}^{N}\varepsilon_lg_l|_{\Gamma}\\
      &m(x,t)|_{\Gamma}=m_0|_{\Gamma}+\sum_{l=1}^{N}\varepsilon_lh_l|_{\Gamma}
\end{aligned}
\end{equation}
where $g_l,h_l\in C^{2+\alpha,1+\frac{\alpha}{2}}(\mathbb{R}^n\times\mathbb{R})$  and $\varepsilon = (\varepsilon_1,\varepsilon_2, \dots,\varepsilon_N) \in \mathbb{R}^N$ with $|\varepsilon| = \sum_{l=1}^N |\varepsilon_l|$ small enough. Recall that $(u_0,m_0)$ is a stationary stable solution of system \eqref{eq:MFG2} and $m_0\geq 0$, so $m(x,t)|_{\Gamma}\geq0$ for $\varepsilon$ small enough. Then, by Theorem \ref{wellpose}, there exists a unique solution $(u(x,t;\varepsilon), m(x,t;\varepsilon))$ of \eqref{eq:MFG2} and $(u(x,t;0), m(x,t;0))=(u_0,m_0)$ is the solution of \eqref{eq:MFG2} when $\varepsilon = 0$. Therefore, the zeroth-order linearization system is given by:
\begin{equation}\label{MFG2Linear0Order}
    \begin{cases}
        -\Delta u_0(x) + [\nabla u_0(x)]^TA(x)[\nabla u_0(x)]= 0&\quad \text{in }Q,\\
        -\Delta m_0(x) - 2\nabla\cdot\left( m_0(x)[\nabla u_0(x)]^TA(x)\right)= 0  &\quad \text{in }Q.
    \end{cases}
\end{equation}

Let $S$ be the solution operator of \eqref{eq:MFG2} defined in Theorem \ref{wellpose}. Then there exists a bounded linear operator $\mathcal{B}$ from $\mathcal{H}:=B_{\delta}( C^{2+\alpha,1+\frac{\alpha}{2}}(\Gamma))\times D_{\delta}( C^{2+\alpha,1+\frac{\alpha}{2}}(\Gamma))$ to $[C^{2+\alpha,1+\frac{\alpha}{2}}(Q)]^2$ such that
\begin{equation}
	\lim\limits_{\|(g,h)\|_{\mathcal{H}}\to0}\frac{\|S(g,h)-S(u_0,m_0)- \mathcal{B}(g,h)\|_{[C^{2+\alpha,1+\frac{\alpha}{2}}(Q)]^2}}{\|(g,h)\|_{\mathcal{H}}}=0,
\end{equation} 
where $\|(g,h)\|_{\mathcal{H}}:=\|g\|_{ C^{2+\alpha,1+\frac{\alpha}{2}}(\Gamma)      }+\|h\|_{C^{2+\alpha,1+\frac{\alpha}{2}}(\Gamma)}$.
Now we consider $\varepsilon_l=0$ for $l=2,\dots,N$ and fix $f_1$.
Notice that if $F\in\mathcal{A}$ , then $F$ depends on the distribution $m$ locally. We have that 
	$$
	\dfrac{\delta F}{ \delta m}(x,m_0)(\rho(x,t)):=\left<\dfrac{\delta F}{ \delta m}(x,m_0,\cdot),\rho(x,t)\right>_{L^2}=
	F^{(1)}(x)\rho(x,t),
	 $$ 
	 up to a constant. 
Then it is easy to check that $\mathcal{B}(g,h)\Big|_{\varepsilon_1=0}$ is the solution map of the following system which is called the first-order linearization system:
\begin{equation}\label{MFG2Linear1}
    \begin{cases}
        -\partial_t u^{(1)}(x,t) -\Delta u^{(1)}(x,t) + [\nabla u_0(x)]^TA(x)\nabla u^{(1)}(x,t) \\\quad\quad\quad\quad\quad\quad\quad\quad\quad\quad\quad\quad\quad\quad+ [\nabla u^{(1)}(x,t)]^TA(x)\nabla u_0(x)= 0 &\quad \text{in }Q,\\
        \partial_t m^{(1)}(x,t) -\Delta m^{(1)}(x,t) - 2\nabla\cdot\left( m_0(x)[\nabla u^{(1)}(x,t)]^TA(x)\right) \\\quad\quad\quad\quad\quad\quad\quad\quad\quad\quad\quad\quad\quad\quad-2\nabla \cdot \left( m^{(1)}(x,t)[\nabla u_0(x)]^TA(x)\right)= 0  &\quad \text{in }Q,\\
        u^{(1)}(x,t)=g_1,\quad m^{(1)}(x,t)=h_1 &\quad \text{in }\Gamma,\\
        u^{(1)}(x,T)=0, \quad m^{(1)}(x,0) = 0  &\quad \text{in }\Omega.
    \end{cases}
\end{equation}

 In the following, we define
\begin{equation}\label{eq:ld1}
 (u^{(1)}, m^{(1)} ):=\mathcal{B}(g,h)\Big|_{\varepsilon_1=0}. 
 \end{equation}
For notational convenience, we write
\begin{equation}\label{eq:ld2}
u^{(1)}=\partial_{\varepsilon_1}u(x,t;\varepsilon)|_{\varepsilon=0}\quad\text{and}\quad m^{(1)}=\partial_{\varepsilon_1}m(x,t;\varepsilon)|_{\varepsilon=0}.
\end{equation}
We employ these notations in our following discussion to simplify the exposition and their meaning should be clear from the context. In a similar manner, we can define, for all $l\in \mathbb{N}$, $u^{(l)} := \left. \partial_{\varepsilon_l} u \right\rvert_{\varepsilon = 0}$, $m^{(l)} := \left. \partial_{\varepsilon_l} m \right\rvert_{\varepsilon = 0}$, 
we obtain a sequence of similar systems.

For the higher orders, we consider
\[u^{(1,2)}:= \left. \partial_{\varepsilon_1} \partial_{\varepsilon_2} u \right\rvert_{\varepsilon = 0}, \quad m^{(1,2)}:= \left. \partial_{\varepsilon_1} \partial_{\varepsilon_2} m \right\rvert_{\varepsilon = 0}.\]
Similarly, $(u^{(1,2)},m^{(1,2)})$ can be viewed as the output of the second-order Fr\'echet derivative of $S$ at a specific point. By following similar calculations in deriving \eqref{MFG2Linear1}, one can show that the second-order linearization is given as follows:
\begin{equation}\label{MFGQuadraticLinear2}
    \begin{cases}
        -\partial_t u^{(1,2)} -\Delta u^{(1,2)} + [\nabla u_0 ]^TA\nabla u^{(1,2)} + [\nabla u^{(2)}]^TA \nabla u^{(1)} + [\nabla u^{(1)}]^TA \nabla u^{(2)}\\\qquad\qquad\qquad\qquad\qquad\qquad +  [\nabla u^{(1,2)} ]^TA\nabla u_0 = F^{(2)}m^{(1)}m^{(2)} &\quad \text{in }Q,\\
        \partial_t m^{(1,2)} -\Delta m^{(1,2)} - 2\nabla\cdot\left( m_0[\nabla u^{(1,2)}]^TA\right) - 2\nabla \cdot\left( m^{(2)} [\nabla u^{(1)}]^TA\right)  \\\qquad\qquad\qquad\qquad\qquad\qquad -2\nabla\cdot\left( m^{(1)} [\nabla u^{(2)}]^TA\right) - 2\nabla\cdot\left( m^{(1,2)} [\nabla u_0]^TA\right) = 0   &\quad \text{in }Q,\\
        u^{(1,2)}(x,T)=0,\quad m^{(1,2)}(x,0) = 0  &\quad \text{in }\Omega.
    \end{cases}
\end{equation}

Inductively, for $N\in \mathbb{N}$, we consider
\[u^{(1,2,\dots,N)}:= \left. \partial_{\varepsilon_1} \partial_{\varepsilon_2} \cdots \partial_{\varepsilon_N} u \right\rvert_{\varepsilon = 0}, \quad m^{(1,2,\dots,N)}:= \left. \partial_{\varepsilon_1} \partial_{\varepsilon_2} \cdots \partial_{\varepsilon_N} m \right\rvert_{\varepsilon = 0},\] and obtain a sequence of parabolic systems.

\section{Proof of Main Theorem}\label{sec:mainproof}
Before we begin the proof of the main theorem, we first give an essential auxiliary lemma regarding the construction of complex geometric optics solutions, which is a result of \cite{ParabolicConvectionVectorCGO}.

\begin{theorem}\label{thm:cgo} Let $\psi:=\rho^2t+\rho\zeta\cdot x$ for some fixed $\zeta\in\mathbb{S}^{n-1}$. Suppose $\phi\in [C^1(Q)]^n$ and $q\in L^{\infty}(Q)$. For $\xi\in\mathbb{R}^n$ and $\tau\in\mathbb{R}$, and any arbitrary $\chi\in C_c^\infty((0,T))$ and $\rho>0$,
	\begin{enumerate}
	    \item there exist solutions $w\in H^1(0,T;H^{-1}(\Omega))\cap L^2(0,T;H^1(\Omega))$ to the forward parabolic equation \begin{equation}\label{forwardcgoeq}\begin{cases}
	\partial_t w - \Delta w -\phi\cdot\nabla w+ qw=0\quad &\text{ in }Q,\\
 w(x,t)=0 &\text{ on }\Sigma,\\
 w(x,0)=0 &\text{ in }\Omega,\end{cases}
\end{equation}  of the form
	\begin{equation}\label{cgo1}
		w=e^\psi\left(\chi(t)e^{-i(x,t)\cdot(\xi,\tau)}\exp\left(\frac{1}{2}\int_0^\infty \zeta\cdot\phi(x+s\xi)\,ds\right) +z_+(x,t)\right)
	\end{equation}
	where $z_+(x,t;\phi,q)\in H^1(0,T;H^{-1}(\Omega))\cap L^2(0,T;H^1(\Omega))$ satisfies the following condition:
	\begin{equation}\label{decay1}
		\lim_{\rho\to\infty}\norm{z_+(x,t;\phi,q)}_{L^2(Q)}=0.
	\end{equation}
 
 \item There exist solutions $v\in H^1(0,T;H^{-1}(\Omega))\cap L^2(0,T;H^1(\Omega))$ to the backward parabolic equation \begin{equation}\label{backwardcgoeq}\begin{cases}
	-\partial_t v - \Delta v +\phi\cdot\nabla v+ qv=0\quad &\text{ in }Q,\\
 v(x,T)=0 &\text{ in }\Omega,\end{cases}
\end{equation} 
of the form 
\begin{equation}\label{cgo2}
		v=e^{-\psi}\left(\chi(t)\exp\left(-\frac{1}{2}\int_0^\infty \zeta\cdot\phi(x+s\xi)\,ds\right) +z_-(x,t)\right)
	\end{equation}
	where $z_-(x,t;\phi,q)\in H^1(0,T;H^{-1}(\Omega))\cap L^2(0,T;H^1(\Omega))$ satisfies the following condition:
	\begin{equation}\label{decay2}
		\lim_{\rho\to\infty}\norm{z_-(x,t;\phi,q)}_{L^2(Q)}=0.
	\end{equation}
	\end{enumerate}
\end{theorem}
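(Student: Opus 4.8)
The plan is to construct both families of solutions by a geometric-optics (WKB) ansatz combined with a Carleman-type solvability estimate for the conjugated operator, treating the forward equation \eqref{forwardcgoeq} in detail and deducing the backward case \eqref{backwardcgoeq} by the symmetric substitution $\psi\mapsto-\psi$ and time reversal. First I would conjugate the parabolic operator by the exponential weight: setting $w=e^{\psi}v$ with $\psi=\rho^2 t+\rho\,\zeta\cdot x$ and using $|\zeta|=1$, a direct computation shows that the $O(\rho^2)$ terms cancel, so $w$ solves \eqref{forwardcgoeq} if and only if $v$ solves the conjugated equation
\[
\partial_t v-\Delta v-2\rho\,\zeta\cdot\nabla v-\phi\cdot\nabla v-\rho(\zeta\cdot\phi)v+qv=0 .
\]
The essential feature is that conjugation produces the large first-order term $-2\rho\,\zeta\cdot\nabla v$; this is what drives the transport equation for the leading profile and supplies the gain in the remainder estimate.

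Next I would split $v=a_0+z_+$ and choose the principal amplitude $a_0$ to annihilate the terms of order $\rho$. This amounts to solving the leading transport equation $2\,\zeta\cdot\nabla a_0+(\zeta\cdot\phi)a_0=0$ by integrating along the rays $s\mapsto x+s\xi$ determined by the construction; its integrating factor reproduces, up to the sign and direction conventions fixed in \eqref{cgo1}, the displayed factor $\exp\!\big(\tfrac12\int_0^\infty\zeta\cdot\phi(x+s\xi)\,ds\big)$. The plane-wave modulation $e^{-i(x,t)\cdot(\xi,\tau)}$ and the time cut-off $\chi(t)$ are inserted as free factors lying in the kernel of the leading transport operator, and they make the principal profile an oscillating probe of prescribed frequency $(\xi,\tau)$. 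For the ray integral to converge one first extends $\phi$ off $Q$ with compact support, which is harmless since only interior values enter the later arguments. With $a_0$ so chosen, the remainder must solve $L_\psi z_+=-L_\psi a_0=:g_\rho$, where $L_\psi$ denotes the conjugated operator above and $g_\rho$ collects only the $O(1)$ leftovers $\partial_t a_0-\Delta a_0-\phi\cdot\nabla a_0+q a_0$; in particular $\|g_\rho\|_{L^2(Q)}$ is bounded uniformly in $\rho$.

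The heart of the argument is a solvability estimate for $L_\psi$ that is uniform in $\rho$ and gains a negative power of $\rho$. Since $\psi$ is a linear (limiting Carleman) weight for the heat operator, an integration-by-parts/Carleman estimate — or, after a partial Fourier transform in the tangential and time variables, a direct symbol estimate exploiting the dominant term $2\rho\,\zeta\cdot\nabla$ — yields a bound of the shape $\|z_+\|_{L^2(Q)}\lesssim \rho^{-1}\|g_\rho\|_{L^2(Q)}$, together with the $H^1(0,T;H^{-1}(\Omega))\cap L^2(0,T;H^1(\Omega))$ regularity from standard parabolic theory; this is precisely the decay \eqref{decay1}. The homogeneous initial and boundary data are arranged either by taking $\chi$ supported away from $t=0$ and localizing, or by subtracting a corrector that is exponentially small relative to $e^{\psi}$ and can be absorbed into $z_+$. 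The backward case is identical after replacing $e^{\psi}$ by $e^{-\psi}$, reversing time and flipping the sign in the transport equation, which produces the profile \eqref{cgo2} with the decay \eqref{decay2}. I expect the main obstacle to be exactly this uniform solvability-with-gain estimate for the non-self-adjoint operator $L_\psi$: one must show the large first-order term dominates the convection and potential terms while retaining control up to the lateral boundary and the initial time, which is the technical core we import from \cite{ParabolicConvectionVectorCGO}.
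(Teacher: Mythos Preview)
Your proposal outlines the standard CGO construction and is essentially correct, but note that the paper does not actually prove this theorem: it is stated as ``a result of \cite{ParabolicConvectionVectorCGO}'' and quoted without proof. So there is no ``paper's own proof'' to compare against; the approach you sketch---conjugation by $e^{\psi}$, solving the leading transport equation for the amplitude, and invoking a Carleman-type solvability estimate with $\rho^{-1}$ gain for the remainder---is precisely the machinery developed in that cited reference, which you yourself acknowledge importing at the end.

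One small point worth tightening: in the forward equation \eqref{forwardcgoeq} the homogeneous Dirichlet condition $w=0$ on $\Sigma$ is imposed, yet the ansatz $e^{\psi}(a_0+z_+)$ does not vanish on the lateral boundary. Your remark about ``subtracting a corrector that is exponentially small relative to $e^{\psi}$'' is the right idea but deserves more care, since the corrector must itself lie in the stated regularity class and be absorbed into $z_+$ without destroying \eqref{decay1}; alternatively, the boundary condition in \eqref{forwardcgoeq} may simply be a typo in the paper (the backward equation \eqref{backwardcgoeq} carries no lateral condition, and the subsequent Runge approximation in Lemma~\ref{RungeDenseApprox} is precisely what upgrades these solutions to ones with prescribed boundary behaviour).
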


Since we mainly work in H\"{o}lder space and we only get exponentially growing solutions in $H^1(\Omega)$, we still need the following denseness property.

\begin{lemma}[Runge approximation with full data]\label{RungeDenseApprox}
   Suppose $\phi\in [C^1(Q)]^n$ and $q\in L^{\infty}(Q)$. Then for any solutions $w,v\in L^2(0,T;H^1(\Omega))\cap H^1(0,T;H^{-1}(\Omega))$	to \begin{equation}\label{forwardcgoeqapprox}\begin{cases}
	\partial_t w - \Delta w -\phi\cdot\nabla w+ qw=0\quad &\text{ in }Q,\\
 w(x,0)=0 &\text{ in }\Omega,\end{cases}
\end{equation} 
and 
\begin{equation}\label{backwardcgoeqapprox}\begin{cases}
	-\partial_t v - \Delta v +\phi\cdot\nabla v+ qv=0\quad &\text{ in }Q,\\
 v(x,T)=0 &\text{ in }\Omega,\end{cases}
\end{equation} respectively, 
and  any $\eta,\eta'>0$, there exist solutions $W,V\in C^{2+\alpha,1+\alpha/2}(\overline{Q})$ to \begin{equation}\label{forwardcgoeqapproxV}\begin{cases}
	\partial_t W - \Delta W -\phi\cdot\nabla W+ qW=0\quad &\text{ in }Q,\\
 W(x,0)=0 &\text{ in }\Omega,\end{cases}
\end{equation} 
and 
\begin{equation}\label{backwardcgoeqapproxW}\begin{cases}
	-\partial_t V - \Delta V +\phi\cdot\nabla V+ qV=0\quad &\text{ in }Q,\\
 V(x,T)=0 &\text{ in }\Omega,\end{cases}
\end{equation}  respectively
	such that
	\[\norm{W-w}_{L^2(Q)}<\eta\quad \text{ and }\quad \norm{V-v}_{L^2(Q)}<\eta.\]
\end{lemma}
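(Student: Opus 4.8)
The plan is to prove both approximation statements by a Hahn--Banach duality argument combined with a parabolic unique continuation principle, treating the forward pair $(w,W)$ first and obtaining the backward pair $(v,V)$ by time reversal. Fix a smooth bounded domain $\Omega'$ with $\Omega\Subset\Omega'$ and set $Q'=\overline{\Omega'}\times[0,T]$. Restricting to $Q$ any classical solution of $\partial_t W-\Delta W-\phi\cdot\nabla W+qW=0$ on $Q'$ with $W(\cdot,0)=0$ on $\Omega'$ produces an element of the admissible family of the lemma, so it suffices to show that the $L^2(Q)$-closure of these restrictions contains every weak solution $w$. By Hahn--Banach this reduces to the implication: if $f\in L^2(Q)$ satisfies $\int_Q fW\,dx\,dt=0$ for all such $W$, then $\int_Q fw\,dx\,dt=0$ for every weak solution $w$.

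Given such an $f$, I would extend it by zero to $\tilde f\in L^2(Q')$ and solve the adjoint backward problem $\mathcal L^\ast p:=-\partial_t p-\Delta p+\nabla\cdot(\phi p)+qp=\tilde f$ in $Q'$, with $p=0$ on $\partial\Omega'\times(0,T)$ and $p(\cdot,T)=0$; this is a well-posed backward parabolic problem, time-reversing to a forward one, and yields $p$ with enough regularity to integrate by parts. Pairing $\mathcal L^\ast p$ against an arbitrary classical solution $W$ on $Q'$ and integrating by parts, the interior term drops since $\mathcal L W=0$; the temporal terms vanish because $W(\cdot,0)=0$ and $p(\cdot,T)=0$; and the convection and Dirichlet boundary terms on $\partial\Omega'\times(0,T)$ vanish because $p=0$ there. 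What remains is $\int_{\partial\Omega'\times(0,T)}W\,\partial_\nu p\,dS\,dt=0$, and since the lateral traces of forward solutions with zero initial data are dense, this forces $\partial_\nu p=0$ on $\partial\Omega'\times(0,T)$.

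Consequently $p$ solves the homogeneous equation $\mathcal L^\ast p=0$ in the collar $(\Omega'\setminus\overline\Omega)\times(0,T)$, where $\tilde f\equiv0$, with vanishing Cauchy data $p=\partial_\nu p=0$ on $\partial\Omega'\times(0,T)$. Invoking the unique continuation principle for parabolic equations from lateral Cauchy data, I conclude $p\equiv0$ in the collar, and in particular $p=\partial_\nu p=0$ on $\partial\Omega\times(0,T)$. Finally, for any weak solution $w$ on $Q$ I integrate $\int_Q w\,\mathcal L^\ast p$ by parts over $Q$: modulo the vanishing term $\int_Q(\mathcal L w)p$ this produces $\int_Q fw$, the temporal terms vanish from $w(\cdot,0)=0$ and $p(\cdot,T)=0$, and all lateral terms vanish because $p=\partial_\nu p=0$ on $\partial\Omega\times(0,T)$. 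Hence $\int_Q fw=0$, completing the duality argument. The statement for $V$ and $v$ then follows verbatim after reversing time, which swaps the roles of the forward and adjoint operators.

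I expect the main obstacle to be twofold: first, ensuring that the adjoint solution $p$ carries enough regularity (interior $H^2$ in space together with a well-defined conormal trace) for each integration by parts to be legitimate when tested against the merely $L^2(0,T;H^1)\cap H^1(0,T;H^{-1})$ weak solution $w$; and, above all, the invocation of parabolic unique continuation in the collar. Unlike the elliptic setting, parabolic UCP from lateral Cauchy data is delicate and must be quoted in the precise form propagating zero Cauchy data on $\partial\Omega'\times(0,T)$ across the full time interval, and one must also check that the density of lateral traces used to extract $\partial_\nu p=0$ is compatible with the regularity class in which $p$ lives.
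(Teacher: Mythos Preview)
Your proposal is correct in outline, but it takes a genuinely more elaborate route than the paper. Both arguments use the same Hahn--Banach reduction and solve the adjoint backward problem $\mathcal L^\ast p=f$, and both extract $\partial_\nu p=0$ on the lateral boundary from the arbitrariness of the Dirichlet trace of smooth forward solutions. The difference is \emph{where}: you work on an enlarged domain $\Omega'\Supset\Omega$, obtain $\partial_\nu p=0$ on $\partial\Omega'\times(0,T)$, and then invoke parabolic unique continuation in the collar to transport the vanishing Cauchy data down to $\partial\Omega\times(0,T)$. The paper simply solves the adjoint problem on $Q$ itself with Dirichlet condition $p=0$ on $\Sigma$; since this is the \emph{full data} situation, the trace $W|_\Sigma$ of smooth solutions on $Q$ already ranges over all (compactly supported) functions on $\Sigma$, so the identity $\int_\Sigma W\,\partial_\nu p=0$ directly yields $\partial_\nu p=0$ on $\Sigma$, and the final integration by parts against $w$ finishes without any UCP step.

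Your detour is the standard machinery for \emph{partial data} Runge approximation, where one only controls the trace on a sub-boundary $\Sigma'\subsetneq\Sigma$ and must enlarge the domain across $\Sigma\setminus\Sigma'$ to manufacture the needed density. Here that machinery is unnecessary and introduces exactly the two obstacles you flag yourself: the delicate parabolic UCP from lateral Cauchy data, and the extra regularity bookkeeping for $p$ across two domains. Dropping the enlargement and working directly on $Q$ eliminates both issues and gives a shorter, more elementary proof.
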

\begin{proof}
    The proof mainly follows that of Lemma 4.1 of \cite{LinLiuLiuZhang2021-InversePbSemilinearParabolic-CGOSolnsSuccessiveLinearisation}. We will show the case for the forward parabolic equation, and the backward one can be proved similarly. Define
	\[X=\left\{W\in C^{2+\alpha,1+\alpha/2}(\overline{Q})\,\Big|\, 
	W \text{ is a solution to } \eqref{forwardcgoeq} \right\}\] and
	\[Y=\left\{w\in L^2(0,T;H^1(\Omega))\cap H^1(0,T;H^{-1}(\Omega)) \,\Big|\, w \text{ is a solution to } \eqref{forwardcgoeq} \right\}.\] 
	We aim to show that $X$ is dense in $Y$. By the Hahn-Banach theorem, it suffices to prove the following statement: If $f\in L^2(Q)$ satisfies
	$$\int_{Q}fW\,dxdt=0, \quad \text{ for any } W\in X,$$
	then
	$$\int_{Q}fw\,dxdt=0, \quad \text{ for any }w\in Y.$$
	To this end, 
	let $f\in L^2(Q)$ and suppose $\int_{Q}fW\,dxdt=0$,  for any  $W\in X$. Consider     
    \begin{equation}\begin{cases}
	-\partial_t \tilde{W} - \Delta \tilde{W} +\phi\cdot\nabla \tilde{W}+\tilde{W}\nabla\cdot\phi+ q\tilde{W}=f\quad &\text{ in }Q,\\
    \tilde{W}(x,t)=0\quad&\text{ on }\Sigma,\\
 \tilde{V}(x,T)=0 &\text{ in }\Omega,\end{cases}
\end{equation} 
with solution in $H^{2, 1}(Q)$. For any $W\in X$, one has 
\begin{align*}
	0=&\int_{Q}fW\,dx\,dt=\int_Q (-\partial_t \tilde{W} - \Delta \tilde{W} +\phi\cdot\nabla \tilde{W}+\tilde{W}\nabla\cdot\phi+ q\tilde{W}) W\,dx\,dt=\int_{\Sigma} \p_\nu \overline{W}W\, dS\,dt.
\end{align*}
Since $W|_{\Sigma}$ can be arbitrary function, which  is compactly supported on $\Sigma$, we must have $\partial_\nu \overline{W}=0$ on $\Sigma$.
Thus, for any $ w\in Y$,
\begin{align*}
	\int_{Q}fw \,dx\,dt&=\int_Q (-\partial_t \tilde{W} - \Delta \tilde{W} +\phi\cdot\nabla \tilde{W}+\tilde{W}\nabla\cdot\phi+ q\tilde{W}) w\,dxdt=\int_{\Sigma}\p_\nu\overline{W}w\,dSdt=  0,
\end{align*}
which verifies the assertion.
\end{proof}

\subsection{Recovery of Stationary Solution and Hamiltonian}

Let $(u_{0,i},m_{0,i})$ be the solutions of \eqref{eq:MFG2i} which satisfy \eqref{MFG2Linear0} with corresponding Hamiltonian $A_i(x)$ and running cost $F_i(x,m)$, for $i=1,2$, i.e. 
\begin{equation}\label{MFG2Linear0i}
    \begin{cases}
        -\Delta u_{0,i}(x) + [\nabla u_{0,i}(x)]^TA_i(x) \nabla u_{0,i}(x)= 0&\quad \text{in }\Omega,\\
        -\Delta m_{0,i}(x) - 2\nabla\cdot\left( m_{0,i}(x)[\nabla u_{0,i}(x)]^TA_i(x)\right)= 0  &\quad \text{in }\Omega.
    \end{cases}
\end{equation}
At the same time, the corresponding first order linearized solutions $(u^{(1)}_i,m^{(1)}_i)$ satisfy \eqref{MFG2Linear1} with the same Hamiltonian $A_i(x)$ and running cost $F_i(x,m)$, for $i=1,2$, i.e. 
\begin{equation}\label{MFG2Linear1i}
    \begin{cases}
        -\partial_t u^{(1)}_i(x,t) -\Delta u^{(1)}_i(x,t) + [\nabla u_{0,i}(x)]^TA_i(x) \nabla u^{(1)}_i(x,t)
        \\\quad\quad\quad\quad\quad\quad\quad\quad\quad\quad\quad\quad + [\nabla u^{(1)}_i(x,t)]^TA_i(x) \nabla u_{0,i}(x)= 0 &\quad \text{in }Q,\\
        \partial_t m^{(1)}_i(x,t) -\Delta m^{(1)}_i(x,t) - 2\nabla\cdot\left( m_{0,i}(x)[\nabla u^{(1)}_i(x,t)]^TA_i(x)\right) \\\quad\quad\quad\quad\quad\quad\quad\quad\quad\quad\quad\quad -2\nabla \cdot \left(m^{(1)}_i(x,t)[\nabla u_{0,i}(x)]^TA_i(x)\right) = 0  &\quad \text{in }Q,\\
        u^{(1)}_i(x,t)=g_1,\quad m^{(1)}_i(x,t)=h_1 &\quad \text{in }\Gamma,\\
        u^{(1)}_i(x,T)=0, \quad m^{(1)}(x,0) = 0  &\quad \text{in }\Omega.
    \end{cases}
\end{equation}

Observe that we have the identity
\[[\nabla u_{0,i}]^TA_i \nabla u^{(1)}_i+[\nabla u^{(1)}_i]^TA_i \nabla u_{0,i}=\left(\left[[\nabla u_{0,i}]^TA_i\right]^T+A_i\nabla u_{0,i}\right)\cdot\nabla u^{(1)}_i = 2A_i\nabla u_{0,i}\cdot \nabla u^{(1)}_i. \]
Let $\tilde{u}=u^{(1)}_1-u^{(1)}_2$, $\tilde{m}=m^{(1)}_1-m^{(1)}_2$ and $q_i=2A_i\nabla u_{0,i}$. Then, taking the difference of the two equations for $i=1,2$ for the first equation in \eqref{MFG2Linear1i}, we have 
\begin{equation}\label{MFG2Linear1U}
    \begin{cases}
        -\partial_t \tilde{u} -\Delta \tilde{u} + q_1\cdot \nabla \tilde{u} + (q_1-q_2)\cdot\nabla u^{(1)}_2= 0 &\quad \text{in }Q,\\
        \tilde{u}(x,t)=0,&\quad \text{in }\Gamma,\\
        \tilde{u}(x,T)=0&\quad \text{in }\Omega.
    \end{cases}
\end{equation}
Furthermore, by Theorem \ref{thm:cgo}, we have a solution of $u_2^{(1)}$ to \eqref{MFG2Linear1i} of the form \eqref{cgo2}.

Consider the solution $w$ of the form \eqref{cgo1} to 
\begin{equation}\label{TestFunction}
    \begin{cases}
        \partial_t w - \Delta w - q_1\cdot \nabla w+w\nabla\cdot q_1= 0 &\quad \text{in }Q,\\
        w(x,0)=0&\quad \text{in }\Omega.
    \end{cases}
\end{equation}
Multiplying \eqref{MFG2Linear1U} by $w$ and using integration by parts, we have
\[ \int_Q\tilde{u}\partial_t w -\int_Q \tilde{u}\Delta w - \int_Q \tilde{u}(q_1\cdot\nabla w + w\nabla\cdot q_1) + \int_Q w(q_1-q_2)\cdot\nabla u^{(1)}_2(x,t)= 0.\] 
When $\mathcal{M}_{F_1,A_1,U_1}=\mathcal{M}_{F_2,A_2,U_2}$, this gives 
\begin{equation}\label{IntegralIdentity}\int_Q w(q_1-q_2)\cdot\nabla u^{(1)}_2(x,t)\leq C\rho^{\frac{1}{2}},\end{equation}  by following the argument of Section 5 of \cite{ParabolicConvectionVectorCGO}. 
This means that when $\rho\to\infty$, 
\begin{equation}\label{IntegralIdentity2}\frac{1}{\rho}\int_Q w(q_1-q_2)\cdot\nabla u^{(1)}_2(x,t)\to0.\end{equation}

Substituting into \eqref{IntegralIdentity2} the expressions for $u_2^{(1)}$ and $w$ from \eqref{cgo2} and \eqref{cgo1} respectively, in the limit $\rho\to\infty$, we have 
\[
\int_Q \chi^2(t)e^{-i(x,t)\cdot(\xi,\tau)} (q_1-q_2)\cdot\zeta\,dx\,dt= 0\quad\text{ for all }\zeta\in\mathbb{S}^{n-1}
\]
Since the above identity holds for all $\chi\in C_{c}^{\infty}(0,T)$, therefore we get 
\[
\int_{\mathbb{R}^n} e^{-ix\cdot\xi} (q_1-q_2)\cdot\zeta\,dx= 0\quad\text{ for all }\zeta\in\mathbb{S}^{n-1}.
\]
Identifying this as the Fourier transform with respect to $\xi$, and since this holds for all unit vectors $\zeta$, this gives that 
\begin{equation}\label{qEq}q_1=q_2\quad \text{ in }\Omega,\end{equation} and we denote $q=q_1=q_2$.

Next, we consider \eqref{MFG2Linear0i}. Then the first equation can be rewritten as 
\begin{equation}
        -\Delta u_{0,i}(x) + \frac{1}{2}q(x)\cdot \nabla u_{0,i}(x)= 0\quad \text{in }\Omega.
\end{equation}
Observe that this is a second order elliptic equation in $u_{0,i}$. Since $\mathcal{M}_{F_1,A_1,U_1}=\mathcal{M}_{F_2,A_2,U_2}$, in particular $u_{0,1}=u_{0,2}$ and $\partial_\nu u_{0,1}=\partial_\nu u_{0,2}$ on $\partial\Omega$, by the unique continuation principle for elliptic operators \cite{KochTataru2001UCPElliptic}, it must hold that 
\begin{equation}\label{uEq}u_{0,1}=u_{0,2} \quad \text{ in }\Omega,\end{equation} and we denote $u_0=u_{0,1}=u_{0,2}$.

In particular, it holds that $\nabla u_{0,1}=\nabla u_{0,2}$. By Definition \ref{def:A}, for $A_1=\kappa_1g$ and $A_2=\kappa_2g$, \eqref{qEq} can be rewritten as \[\kappa_1g\nabla u_{0,1}=\kappa_2g\nabla u_{0,2},\] from which we obtain \[\kappa_1=\kappa_2.\] This means that up to the conformal class $C_g$, 
\[A(x):=A_1(x)=A_2(x)\quad\text{ in }Q.\]

Therefore, the second equation in \eqref{MFG2Linear0i} can now be rewritten as 
\begin{equation}\label{MFG2Linear0M}
    -\Delta m_{0,i}(x) - \nabla\cdot\left(q(x) m_{0,i}(x)\right)= 0  \quad \text{in }\Omega.
\end{equation}
Once again, this is a second order elliptic equation, this time in $m_{0,i}$. By the equality of the measurement maps $\mathcal{M}_{F_1,A_1,U_1}=\mathcal{M}_{F_2,A_2,U_2}$, in particular $m_{0,1}=m_{0,2}$ and $\partial_\nu m_{0,1}=\partial_\nu m_{0,2}$ on $\partial\Omega$, once again applying the unique continuation principle for elliptic operators, it must hold that 
\begin{equation}\label{mEq}m_0:=m_{0,1}=m_{0,2} \quad \text{ in }\Omega.\end{equation}

Substituting these results into \eqref{MFG2Linear1i}, we have that $\left(u_i^{(1)},m_i^{(1)}\right)$ satisfy a heat-type equation for $i=1,2$. When $\mathcal{M}_{F_1,A_1,U_1}=\mathcal{M}_{F_2,A_2,U_2}$, by the uniqueness of solutions for heat equations, it must hold that 
\begin{equation}\label{UM1Eq}u_1^{(1)}=u_2^{(1)}\quad \text{ and }m_1^{(1)}=m_2^{(1)}\quad \text{ in }Q.\end{equation}

\subsection{Recovery of Running Cost}

Now that we have obtained the equality of the stable stationary solutions $U=(u_0,m_0)$ and the uniqueness of Hamiltonian $A$, we can proceed with the unique identifiability results for $F$. The main ingredient for the recovery of $F$ is the following unique continuation principle, which is Theorem 6.1 of \cite{LiuLoZhang2024decodingMFG}:
\begin{theorem}\label{UCP}

Let $\Sigma'\subset \Sigma$ be an arbitrarily chosen
non-empty relatively open sub-boundary. For $\beta\in L^\infty(Q)$, assume that $(u_i,m_i) \in H^{2,1}(Q)\times H^{2,1}(Q)$ 
satisfy, 
\begin{equation}\label{eq:UCPeq}
    \begin{cases}
        -\partial_t u_i(x,t) -\mathcal{L}_1 u_i(x,t) = D_i(x,t)+\beta(x,t)m_i(x,t) &\quad \text{in }Q,\\
        \partial_t m_i(x,t) -\mathcal{L}_2 m_i(x,t) + \mathcal{L}_3 u_i(x,t) = E_i(x,t)  &\quad \text{in }Q,
    \end{cases}
\end{equation}
for $i=1,2$, for some regular second-order elliptic operators $\mathcal{L}_1$, $\mathcal{L}_2$ and $\mathcal{L}_3$, such that the coefficients (which depend on $x$ and $t$) of $\mathcal{L}_3$ are bounded,
and
\[\begin{cases}
u_i, \nabla u_i, \Delta u_i \in L^{\infty}(Q),\\
m_i, \nabla m_i \in L^{\infty}(Q), \quad 
\partial_t(u_1-u_2),\partial_t(m_1-m_2) \in L^2((\Sigma\backslash\Sigma')\times(0,T)).
\end{cases}\]
Then $u_1=u_2$, $\nabla u_1 = \nabla u_2$, $m_1=m_2$ and $\nabla m_1
= \nabla m_2$ on $\Sigma' \times (0,T)$ implies $u_1=u_2$ and $m_1=m_2$ 
in $Q$.

\end{theorem}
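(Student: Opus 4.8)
The plan is to pass to the difference functions $U:=u_1-u_2$ and $M:=m_1-m_2$. Subtracting the two copies of \eqref{eq:UCPeq} and using that the coefficients $\beta,\mathcal{L}_1,\mathcal{L}_2,\mathcal{L}_3$ (and the source data) are common to $i=1,2$ produces the homogeneous coupled forward--backward parabolic system
\begin{equation*}
\begin{cases}
-\partial_t U-\mathcal{L}_1 U-\beta M=0 &\text{in }Q,\\
\partial_t M-\mathcal{L}_2 M+\mathcal{L}_3 U=0 &\text{in }Q,
\end{cases}
\end{equation*}
together with the vanishing lateral Cauchy data $U=\nabla U=M=\nabla M=0$ on $\Sigma'\times(0,T)$. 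The goal is then to show that these data force $U\equiv M\equiv 0$ in $Q$, i.e. a unique continuation statement for the coupled system across the lateral sub-boundary $\Sigma'$. I would establish this through a combined Carleman estimate.

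First I would fix a weight $\varphi=e^{\lambda\psi}$, with $\psi$ pseudo-convex, having non-vanishing gradient and level sets foliating a one-sided neighbourhood of $\Sigma'$, coupled with the usual parabolic time factor. The same weight must be admissible for both the backward operator $P_1:=-\partial_t-\mathcal{L}_1$ and the forward operator $P_2:=\partial_t-\mathcal{L}_2$; this is possible because the two operators share the elliptic spatial principal part and differ only in the sign of $\partial_t$, to which the weight can be adapted. I would then record the two scalar Carleman estimates, schematically
\begin{equation*}
\int_Q e^{2s\varphi}\big(s^3|U|^2+s|\nabla U|^2\big)\lesssim \int_Q e^{2s\varphi}|P_1U|^2+\mathrm{BT}_1,
\end{equation*}
and the analogous one for $P_2M$, where $\mathrm{BT}_i$ collects integrals over $\Gamma$. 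The Cauchy data kill the contributions over $\Sigma'\times(0,T)$, while the assumed $L^2$-regularity of $\partial_t U,\partial_t M$ on $(\Sigma\setminus\Sigma')\times(0,T)$ is exactly what keeps the remaining boundary terms finite and of lower order after a cut-off.

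Next I would substitute $P_1U=\beta M$ and $P_2M=-\mathcal{L}_3U$ and add the two weighted estimates. The zeroth-order coupling is harmless: $\int_Q e^{2s\varphi}|\beta M|^2\lesssim\int_Q e^{2s\varphi}|M|^2$ is absorbed by the $s^3$-term of the $M$-estimate once $s$ is large. The delicate term is $\int_Q e^{2s\varphi}|\mathcal{L}_3U|^2$, since $\mathcal{L}_3$ is second order in $U$ and hence of the same order as the principal part, which the $U$-estimate only controls through first derivatives. To absorb it I would trade second derivatives using the first equation and interior elliptic regularity: from $\mathcal{L}_1U=-\partial_t U-\beta M$ one gets $|\mathcal{L}_3U|^2\lesssim |D^2U|^2+|\nabla U|^2+|U|^2\lesssim |\partial_t U|^2+|M|^2+|\nabla U|^2+|U|^2$, and then invoke a refined parabolic Carleman estimate that additionally controls $\int_Q e^{2s\varphi}s^{-1}(|\partial_t U|^2+|D^2U|^2)$ on the left. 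Balancing the powers of $s$ and $\lambda$ so that this genuinely dominates the $|\partial_t U|^2$ generated by $\mathcal{L}_3U$ is the crux, and is where the boundedness of the coefficients of $\mathcal{L}_3$ enters. (When $\beta\equiv 0$, as in the first-order linearized system relevant to the application, the first equation decouples, so $U$ solves a scalar parabolic equation and scalar UCP gives $U\equiv 0$; the second equation then becomes scalar in $M$, giving a clean triangular reduction that avoids this difficulty entirely.)

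Finally, once every coupling term has been absorbed into the left-hand sides, the combined estimate reads $\int_Q e^{2s\varphi}(s^3|U|^2+s^3|M|^2+\cdots)\lesssim \mathrm{BT}$, with $\mathrm{BT}$ supported away from $\Sigma'$. Letting $s\to\infty$ forces $U$ and $M$ to vanish on $\{\varphi>c\}$, i.e. in a neighbourhood of $\Sigma'$; a standard foliation/continuation argument, sliding the level sets of $\psi$ through $Q$ and re-applying the local statement, then propagates the vanishing to all of $Q$, yielding $u_1=u_2$ and $m_1=m_2$. I expect the main obstacle to be precisely the top-order coupling $\mathcal{L}_3U$ in the forward equation: unlike $\beta M$, it cannot be treated as a lower-order perturbation of the decoupled estimates, and handling it requires the refined second-derivative Carleman estimate together with a careful choice of parameters, a choice further constrained by the forward--backward sign mismatch between the two equations.
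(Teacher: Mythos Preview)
The paper does not supply its own proof of this theorem: it is simply quoted as ``Theorem~6.1 of \cite{LiuLoZhang2024decodingMFG}'' and used as a black box in Section~\ref{sec:mainproof}. There is therefore no in-paper argument to compare against. That said, the cited reference (by the same authors) does establish this unique continuation principle via weighted Carleman estimates for the coupled forward--backward parabolic system, so your overall strategy---pass to the differences $U,M$, derive a homogeneous system with vanishing lateral Cauchy data on $\Sigma'\times(0,T)$, and apply a two-parameter Carleman estimate with a common weight to absorb the coupling and propagate vanishing from a neighbourhood of $\Sigma'$ through $Q$---is exactly the intended route.

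Two remarks. First, you write that ``the source data are common to $i=1,2$''; note the statement carries $D_i,E_i$ with an $i$-subscript, so the difference system is a~priori inhomogeneous. As written the theorem is being used in the paper in a situation where the inhomogeneity is unknown but the Cauchy data vanish, and the conclusion $U\equiv M\equiv0$ then forces $D_1=D_2$, $E_1=E_2$ a~posteriori; the Carleman argument in the source handles this because the weighted estimate controls the unknown right-hand side through the boundary and interior terms simultaneously. You should not simply set the sources equal at the outset. Second, your identification of the genuine difficulty---that $\mathcal{L}_3U$ is a top-order coupling that cannot be absorbed as a lower-order perturbation and requires the refined Carleman estimate with $s^{-1}$-control of $|\partial_tU|^2+|D^2U|^2$---is correct and is precisely where the argument in \cite{LiuLoZhang2024decodingMFG} spends its effort; the boundedness hypothesis on the coefficients of $\mathcal{L}_3$ is used exactly there.
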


With this result in hand, we can proceed to recover the running cost $F$, by making use of the higher orders of linearization. 

    Consider the second order linearization.
    Let $\bar{u}=u^{(1,2)}_1-u^{(1,2)}_2$ and $\bar{m}=m^{(1,2)}_1-m^{(1,2)}_2$. Then, since we have obtained the equality of the first order linearized solutions in \eqref{UM1Eq}, by taking the difference of the two second order linearized systems \eqref{MFGQuadraticLinear2} for $i=1,2$, we have that $(\bar{u},\bar{m})\in [H^{2,1}(Q)]^2$ is the solution to the system 
    \begin{equation}\label{eq:MFG2Diff2}
    \begin{cases}
        -\partial_t \bar{u}(x,t) -\Delta \bar{u}(x,t) + 2q(x)\cdot\nabla \bar{u}(x,t) =  [F^{(2)}_1(x) - F^{(2)}_2(x)]m^{(1)}(x,t)m^{(2)}(x,t) & \quad \text{in }Q,\\
        \partial_t \bar{m}(x,t) -\Delta\bar{m}(x,t) -2\nabla\cdot\left( m_0(x)[\nabla \bar{u}(x,t)]^TA(x)\right) - \nabla\cdot\left( \bar{m}(x,t) q(x)\right) = 0  &\quad \text{in }Q,\\
        \bar{u}(x,T)=0&\quad \text{in }\Omega,\\
        \bar{m}(x,0)=0 &\quad \text{in }\Omega,\\
        \bar{u}=\nabla \bar{u} = \bar{m} = \nabla \bar{m} = 0 &\quad \text{on }\Sigma,
    \end{cases}
    \end{equation}
    when $\mathcal{M}_{F_1,A_1,U_1}=\mathcal{M}_{F_2,A_2,U_2}$.

    Since $m^{(1)},m^{(2)}$ have been uniquely obtained, expanding $\nabla\cdot\left( m_0[\nabla \bar{u}]^TA\right)$ and $\nabla\cdot\left( \bar{m} q\right)$, we can view $D_i:=F^{(2)}_i(x)m^{(1)}(x,t)m^{(2)}(x,t)$ in \eqref{eq:UCPeq}, and apply the unique continuation principle given by Theorem \ref{UCP} to obtain \begin{equation}
        \bar{v}=\bar{m}\equiv0. 
    \end{equation}
    Substituting this into the first equation of \eqref{eq:MFG2Diff2}, we have that $D_1=D_2$. Choosing $m^{(1)},m^{(2)}\not\equiv0$, we obtain the result
    \[F^{(2)}_1(x)=F^{(2)}_2(x).\] 

    For the higher order Taylor coefficients of $F$, we make use of the high order linearized parabolic systems. The most important ingredient is that the non-linear terms in higher order systems only depend on the solutions of lower order terms. Therefore, we can apply mathematical induction and repeat similar argument to show that $F^{(k)}_1(x)=F^{(k)}_2(x)$ for all $k\in\mathbb{N}$. Hence we have the unique identifiability for the source functions, i.e. \[F_1=F_2.\]
    
    The proof is complete.

\vspace{1em}

\noindent\textbf{Acknowledgment.} 
	The work was supported by the Hong Kong RGC General Research Funds (No. 11311122, 11304224 and 11300821), the NSFC/RGC Joint Research Fund (No. N\_CityU101/21), and the ANR/RGC Joint Research Grant (No. A\_CityU203/19).

    \vspace{1em}

\noindent\textbf{Declaration of interests.} I have nothing to declare.

\bibliographystyle{plain}
\bibliography{ref1,refInversePb,refMFG}
\end{document}